\newtheorem{theorem}{Theorem}[section]
\newtheorem{lemma}[theorem]{Lemma}
\newtheorem{remark}[theorem]{Remark}
\newtheorem{example}[theorem]{Example}
\begin{document}
\title[Radial harmonic functions]{Maximal domains of radial harmonic functions}
\author{P. Gilkey}
\email{gilkey@uoregon.edu}
\address{PG: Mathematics Department, University of Oregon, Eugene OR 97405 USA}
\email{gilkey@uoregon.edu}
\author{J. H. Park}
\address{JHP: Department of
Mathematics, Sungkyunkwan University, Suwon, 16419 Korea.}
\email{parkj@skku.edu}
\subjclass[2010]{53C21}
\keywords{harmonic space, volume density function, radial harmonic function, rank 1 symmetric space}
\begin{abstract} We examine the maximal domain of radial harmonic functions on harmonic spaces in
the context of positive, zero, and negative curvature.
\end{abstract}
\maketitle
\section{Introduction}
Throughout this paper, let $\mathbb{M}:=(M,g)$ be a connected
Riemannian manifold of dimension $m\ge4$. Let $\iota_{\mathbb{M},P}$ be
the injectivity radius at $P$,
let $r_{\mathbb{M},P}(Q)$ be the geodesic distance from $P$ to $Q$, and let
$$
B_{\mathbb{M},P}:=\left\{Q\in M:r_{\mathbb{M},P}(Q)<\iota_{\mathbb{M},P}\right\}
$$
be the geodesic ball centered at $P$ of radius $\iota_{\mathbb{M},P}$.
Let $\vec x\rightarrow\exp_P(x^1e_1+\dots+x^me_m)$ define geodesic coordinates centered at $P$
where $\vec e=(e_1,\dots,e_m)$ is an orthonormal frame for $T_PM$ and $\vec x\in\mathbb{R}^m$
satisfies $\|\vec x\|<\iota_{\mathbb{M},P}$.
The geodesic distance from $P$ is then given by the ordinary Euclidean distance, i.e.
$$
r_{\mathbb{M},P}(\vec x)=\|\vec x\|=\sqrt{(x^1)^2+\dots+(x^m)^2}\quad\text{on}\quad B_{\mathbb{M},P}\,.
$$
Let $d\vec x=dx^1\dots dx^m$ be the Euclidean measure
and let $\operatorname{dvol}_{\mathbb{M}}$ be the Riemannian measure on $\mathbb{M}$.
If  $g_{ij}:=g(\partial_{x^i},\partial_{x^j})$, then
$$
\operatorname{dvol}_{\mathbb{M}}=\tilde\Theta_{\mathbb{M},P} dx^1\dots dx^m\quad\text{where}\quad\tilde\Theta_{\mathbb{M},P}:=\det(g_{ij})^{1/2}
$$
is the {\it volume density function}. Let $S_P^{m-1}:=\{\vec\theta\in T_PM:\|\vec\theta\|=1\}$
be the unit sphere in $T_PM$ and let $\mathbb{S}_P^{m-1}:=(S_P^{m-1},g_S)$
where $g_S$ is the induced Euclidean metric.
Introduce geodesic polar coordinates $(r,\vec\theta)$ to express
$$\vec x=r_{\mathbb{M},P}(\vec x)\vec\theta(\vec x)
$$
for $0<r_{\mathbb{M},P}(\vec x):=\|\vec x\|<\iota_{\mathbb{M},P}$ and
$\vec\theta(\vec x)=\|\vec x\|^{-1}\vec x\in S_P^{m-1}$.
We may also express
$$
\operatorname{dvol}_{\mathbb{M}}
=\Theta_{\mathbb{M},P} dr\operatorname{dvol}_{\mathbb{S}_P^{m-1}}\quad\text{for}\quad
\Theta_{\mathbb{M},P}:=r^{m-1}\tilde\Theta_{\mathbb{M},P}\,.
$$

We say that a smooth function $f$, which is defined near $P$, is {\it radial} if
there exists a smooth function $\eta_1$ of one real variable so $f(\vec x)=\eta_1(\|\vec x\|)$;
$f$ is smooth at $P$ if and only if we can write $f(\vec x)=\eta_2(\|x\|^2)$ or, equivalently,
$\eta_1$ is an even function.
We say that $\mathbb{M}$ is {\it central harmonic about $P$}
if $\tilde\Theta_{\mathbb{M},P}$ or,
equivalently, if $\Theta_{\mathbb{M},P}$ is a radial function.
We say that $\mathbb{M}$ is a {\it harmonic space} if $\mathbb{M}$
is central harmonic about every point.

There is a vast literature on this subject; we refer to \cite{B,BTV95,CR40,GP21,N05,R31,R63} and the
references cited therein for further details.
Note that if $\mathbb{M}$ is a harmonic space, then we can rescale the metric to replace $g$ by $c^2g$
for any $c>0$ to obtain another harmonic space $\mathbb{M}_c:=(M,c^2g)$.
Similarly, we showed previously \cite{GP22y} that if $\mathbb{M}$ is central harmonic at
$P$ and if $\psi$ is a smooth positive radial function, then the
radial conformal deformation $\mathbb{M}_\psi:=(M,\psi^2g)$ is again central harmonic at $P$.
Thus we can construct a space which is central harmonic at a single point
by taking a radial conformal deformation
of a harmonic space. There are, however, examples of spaces which are central harmonic at
some point which do not arise in this fashion \cite{GP22y}.

\subsection{Radial harmonic functions}
One can show that $\mathbb{M}$ is central harmonic about $P$ if and only if
there exists a non-constant radial harmonic function $\phi$ with domain
$B_{\mathbb{M},P}-\{P\}$. We will usually assume $\mathbb{M}$ is real analytic;
using analytic hypoellipticity (see, for example, the discussion in Treves~\cite{T71})
this implies $\phi$ is real analytic; we will use this fact implicitly in much
of what follows. In this
paper, we will be concerned with determining a maximal connected domain of $\phi$ as
often $\phi$ has a natural extension beyond $B_{\mathbb{M},P}-\{P\}$.

\subsection{Examples of harmonic spaces}
Let $\rho_{i\ell}:=g^{jk}R_{ijk\ell}$ be the components of the
{\it Ricci tensor} and let $\tau:=g^{i\ell}\rho_{i\ell}$
be the {\it scalar curvature} where we adopt the {\it Einstein convention} and sum over repeated indices.
The rank~1 symmetric spaces play
a central role in the subject; they are all harmonic spaces.
We give below, up to rescaling, the rank~1 symmetric spaces with $\tau>0$
in Section~\ref{S1.4} and the rank~1 symmetric spaces with $\tau<0$ in Section~\ref{S1.6}.
All known examples of complete harmonic spaces are real analytic.

Let $\mathbb{M}$ be a complete simply connected harmonic space;
the scalar curvature $\tau$ is then constant.
We can always rescale to replace $\tau$ by $c^2\tau$ so only the sign of $\tau$
is relevant and it is natural to study
those cases separately.
If $\tau>0$, then $\mathbb{M}$ is positively curved and, after a suitable rescaling
of the metric, the local geometry is modeled
on a positively curved rank 1 symmetric space.
If $\tau=0$, then $\mathbb{M}$ is flat. If $\tau<0$, the known examples are
the negatively curved rank~1 symmetric spaces and the
Damek--Ricci spaces defined in~\cite{DR92}; the Damek--Ricci
spaces are homogeneous negatively curved harmonic spaces
which need not be rank~1 symmetric spaces. The classification of simply connected complete
harmonic spaces is not yet finished in the negatively
curved setting and it is not known if there are additional negatively curved harmonic spaces.

\subsection{Radial harmonic functions}\label{S1.3}
Let $\Delta^0$ be the Laplace-Beltrami operator on functions. If $\mathbb{M}$ is central harmonic at $P$ and if $\phi$ is a radial function, then
$$\Delta^0\phi=-(\partial_r+\Theta_{\mathbb{M},P}^{-1}\dot\Theta_{\mathbb{M},P})\partial_r\phi\,.$$
Let $\phi_1=\Theta_{\mathbb{M},P}^{-1}$ and let
$\phi_0$ solve the ODE $\partial_r\phi_0=\phi_1$. Then
$$
\Delta^0\phi_0=-(\partial_r+\Theta_{\mathbb{M},P}^{-1}\dot\Theta_{\mathbb{M},P})\phi_1=
\Theta_{\mathbb{M},P}^{-2}\dot\Theta_{\mathbb{M},P}-\Theta_{\mathbb{M},P}^{-1}\dot\Theta_{\mathbb{M},P}\Theta_{\mathbb{M},P}^{-1}=0\,.
$$
Consequently, $\phi_0$ is a non-constant radial harmonic function; since
$\Theta_{\mathbb{M},P}$ vanishes
at $P$, $\phi_0$ is singular at $P$. If $\phi$ is any radial solution to
the equation $\Delta^0\phi=0$ on $B_{\mathbb{M},P}-\{P\}$, then we may express
$$
\phi=a\phi_0+b\,.
$$

\subsection{The rank 1 symmetric spaces with positive curvature}\label{S1.4}
Let $\mathbb{S}^m$ be the unit sphere in
$\mathbb{R}^{m+1}$, let $\mathbb{CP}^k$ be
complex projective space, let $\mathbb{HP}^k$ be quaternionic projective space, and let
$\mathbb{OP}^2$ be the Cayley projective plane. We give these spaces the standard metrics
normalized so
\begin{equation}\label{E1.a}
\begin{array}{| l | l | l | l |}\noalign{\hrule}
\mathbb{M}&\text{dimension}&\text{diameter}&\Theta_{\mathbb{M},P}\\\noalign{\hrule}
\mathbb{S}^m&m&\pi&\sin(r)^{m-1}\vphantom{\vrule height 10pt}\\\noalign{\hrule}
\mathbb{CP}^k&2k&\frac12\pi&\sin(r)^{2k-1}\cos(r)\vphantom{A_{\vrule height 9pt}^{\vrule height 7pt}}\\\noalign{\hrule}
\mathbb{HP}^k&4k&\frac12\pi&\sin(r)^{4k-1}\cos^3(r)\vphantom{A_{\vrule height 9pt}^{\vrule height 7pt}}\\\noalign{\hrule}
\mathbb{OP}^2&16&\frac12\pi&\sin(r)^{15}\cos^7(r)\vphantom{A_{\vrule height 9pt}^{\vrule height 7pt}}\\\noalign{\hrule}
\end{array}\end{equation}
The metric on $\mathbb{S}^m$ is the standard metric inherited from Euclidean space,
the metric on $\mathbb{CP}^k$ is the suitably normalized Fubini-Study metric, and
so forth. The rank 1 symmetric spaces in positive curvature
are compact 2 point homogeneous spaces with
$B_{\mathbb{M},P}=M-\mathcal{C}_{\mathbb{M},P}$ where
$\mathcal{C}_{\mathbb{M},P}$ is the cut-locus:
$$\mathcal{C}_{\mathbb{S}^m,P}=\{-P\},\quad
\mathcal{C}_{\mathbb{CP}^k,P}=\mathbb{CP}^{k-1},\quad
\mathcal{C}_{\mathbb{HP}^k,P}=\mathbb{HP}^{k-1},\quad
\mathcal{C}_{\mathbb{OP}^2,P}=S^7\,.
$$

Let $\mathbb{M}$ be a complete connected harmonic space of positive curvature which is not simply
connected. Let $\mathbb{M}_1$ be the universal cover of $\mathbb{M}$. Then $\mathbb{M}_1$
is a positively curved rank 1 symmetric space.
We will establish the following result in Section~\ref{S2} which
applies to the harmonic spaces of positive curvature but is more general since
no assumption on the metric is made. Although it
is well known, we shall present a proof in Section~\ref{S2} based on the Gauss-Bonnet
formula (see Theorem~\ref{T2.2})
as it motivates a number of examples that we will discuss in Section~\ref{S2}.

\begin{theorem}\label{T1.1}\rm Let $\tilde{\mathbb{M}}=(\tilde M,\tilde g)$ be the universal cover of a smooth
connected complete Riemannian manifold
$\mathbb{M}=(M,g)$ which is not simply-connected.
\begin{enumerate}
\item If $\tilde{\mathbb{M}}=\mathbb{S}^m$ and if $m$ is even, then $\pi_1(M)=\mathbb{Z}_2$ and $M$ is not orientable.
\item If $\tilde{\mathbb{M}}\in\{\mathbb{CP}^k,\mathbb{HP}^k\}$, then $k$ is odd, $\pi_1(M)=\mathbb{Z}_2$, and $M$ is not orientable.
\item $\tilde{\mathbb{M}}$ is not $\mathbb{OP}^2$.
\end{enumerate}\end{theorem}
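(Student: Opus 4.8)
The plan is to apply the Lefschetz fixed point theorem to the deck transformations of the covering $p:\tilde M\to M$. Since $M$ is not simply-connected, the deck group $\pi_1(M)$ is non-trivial, and every non-identity deck transformation $\gamma$ acts freely on $\tilde M$; in particular it has no fixed point, so its Lefschetz number $L(\gamma)=\sum_j(-1)^j\Tr(\gamma^*|_{H^j(\tilde M;\mathbb{Q})})$ vanishes. The Lefschetz number of the identity is the Euler characteristic $\chi(\tilde M)$ computed by Gauss--Bonnet in Theorem~\ref{T2.2}, so this vanishing is the natural refinement of that invariant to the non-trivial group elements. The input I would use is the integral cohomology ring of each model space: $H^*(\mathbb{S}^m)$ has generators only in degrees $0$ and $m$; $H^*(\mathbb{CP}^k)=\mathbb{Z}[h]/(h^{k+1})$ with $\deg h=2$; $H^*(\mathbb{HP}^k)=\mathbb{Z}[u]/(u^{k+1})$ with $\deg u=4$; and $H^*(\mathbb{OP}^2)=\mathbb{Z}[w]/(w^3)$ with $\deg w=8$.

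Because each of these rings is generated in a single even degree by an element spanning an infinite cyclic group, a ring automorphism $\gamma^*$ is determined by a sign: $\gamma^*h=\varepsilon h$, $\gamma^*u=\varepsilon u$, or $\gamma^*w=\varepsilon w$ with $\varepsilon=\pm1$, while $\gamma^*$ acts by $\deg\gamma=\pm1$ on $H^m(\mathbb{S}^m)$. Since all nonzero cohomology sits in even degrees, the Lefschetz number collapses to $L(\gamma)=\sum_{i=0}^{k}\varepsilon^{\,i}$ in the projective cases and to $L(\gamma)=1+(-1)^m\deg\gamma$ for the sphere. For $\mathbb{OP}^2$ one gets $L(\gamma)=1+\varepsilon+\varepsilon^2\in\{3,1\}$, which is never zero; this contradicts freeness and proves part~(3). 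For $\mathbb{CP}^k$ and $\mathbb{HP}^k$, freeness forces $\sum_{i=0}^k\varepsilon^i=0$, which is impossible for $\varepsilon=+1$ and, for $\varepsilon=-1$, holds exactly when $k$ is odd; this simultaneously yields $\varepsilon=-1$ and the parity statement. For $\mathbb{S}^m$ with $m$ even, $L(\gamma)=1+\deg\gamma=0$ forces $\deg\gamma=-1$.

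It then remains to extract the group and the orientation statements. The sign assignment $\gamma\mapsto\varepsilon(\gamma)$, respectively $\gamma\mapsto\deg\gamma$, is a homomorphism $\pi_1(M)\to\{\pm1\}$ because $(\gamma_1\gamma_2)^*=\gamma_2^*\gamma_1^*$; the computation above shows every non-identity element maps to $-1$, so the kernel is trivial and $\pi_1(M)$ embeds in $\mathbb{Z}_2$. As $\pi_1(M)\neq1$, this gives $\pi_1(M)=\mathbb{Z}_2$. For orientability, $\gamma$ acts on the top class $h^k$, $u^k$, or on $H^m(\mathbb{S}^m)$ by $\varepsilon^k=(-1)^k=-1$ using $k$ odd, or by $\deg\gamma=-1$; hence the generating deck transformation is orientation-reversing, and since $\tilde M$ is orientable the double cover $\tilde M\to M$ shows $M$ is non-orientable, completing (1) and (2). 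The one point demanding care is the opening reduction: one must know the full integral cohomology rings well enough to conclude that $\gamma^*$ is pinned down by a single sign and that all cohomology lies in even degrees, since it is exactly this rigidity that turns the vanishing of $L(\gamma)$ into the sharp numerical constraints; everything after that is bookkeeping.
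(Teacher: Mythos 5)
Your proof is correct, but it takes a genuinely different route from the paper's. The paper proves Theorem~\ref{T1.1} via Lemma~\ref{L2.3}: it uses the Chern--Gauss--Bonnet formula (Theorem~\ref{T2.2}) to obtain multiplicativity of the Euler characteristic under finite covers, hence $|\Gamma|$ divides $\chi(\tilde M)=n+1$; it then identifies $H^*(\tilde M/\Gamma)$ with the $\Gamma$-invariant cohomology of $\tilde M$, so that whenever a subgroup acts trivially on the generator $x_{2\ell}$ (automatic when $n$ is even, and true on $\ker(\varepsilon)$ in general) the quotient has the same Euler characteristic and that subgroup must be trivial; the sphere is handled separately in Section~\ref{S2.3}, where a fixed-point-free isometric involution of the round sphere is shown to be $-\operatorname{id}$, giving $M=\mathbb{RP}^m$. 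You instead apply the Lefschetz fixed point theorem to each non-identity deck transformation individually: freeness forces $L(\gamma)=0$, and the rigidity of the cohomology rings converts this vanishing into $\varepsilon(\gamma)=-1$ together with the parity constraint on $k$; your concluding sign-homomorphism step (trivial kernel, hence $\pi_1(M)=\mathbb{Z}_2$, hence non-orientability from the action on the top class) is essentially the same bookkeeping as the paper's $\ker(\varepsilon)$ argument, but the way you pin down $\varepsilon(\gamma)=-1$ for \emph{every} non-identity element is different. Your route is more uniform and more elementary in its inputs: it treats $\mathbb{S}^m$, $\mathbb{CP}^k$, $\mathbb{HP}^k$, and $\mathbb{OP}^2$ by a single mechanism, requires no metric or averaging, no transfer/invariant-cohomology identification, and no curvature integrals, and the per-element constraint $L(\gamma)=0$ is sharper than the global divisibility $|\Gamma|\,\big|\,(n+1)$. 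What the paper's route buys is exactly its reliance on Gauss--Bonnet: the same integral-formula argument is run in reverse in Section~\ref{S4.2} to prove the volume bounds of Theorems~\ref{T1.4} and \ref{T1.5}, which is the authors' stated motivation, and the isometric argument in the sphere case yields the stronger conclusion $M\cong\mathbb{RP}^m$ rather than just $\pi_1(M)=\mathbb{Z}_2$ and non-orientability.
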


\subsection{The maximal domain of $\phi_0$ in non-positive curvature}\label{S1.5}
Let $\mathbb{M}$ be a complete connected Riemannian manifold and let
$\pi:\tilde{\mathbb{M}}\rightarrow\mathbb{M}$ be the universal cover; we assume
$\mathbb{M}$ is not simply connected so $\pi$ is nontrivial. We assume that
$\tilde{\mathbb{M}}$ is a non-positively curved harmonic space. Since $\mathbb{M}$ is
non-positively curved, the exponential map is a covering projection so we can identify
$\tilde M$ with $T_PM$ and $\pi$ with $\exp_P$ for any point $P$ of $M$. There are
no conjugate points in $\mathbb{M}$ or $\tilde{\mathbb{M}}$; there is a unique geodesic
segment minimizing distance between any two points of $\tilde{\mathbb{M}}$ but there
can be several such geodesic segments in $\mathbb{M}$.
Set $\Gamma:=\pi_1(M)$. Then
$\Gamma$ acts on $\tilde{\mathbb{M}}$ by isometries and we may identify
$\mathbb{M}=\tilde{\mathbb{M}}/\Gamma$. If $\tilde P\in\tilde{\mathbb{M}}$, then
$\Gamma\cdot\tilde P$ is a discrete subset of $\tilde{\mathbb{M}}$ so we can replace $\inf$
by $\min$ to define
$$
\tilde{\mathcal{O}}_{\tilde P}:
=\left\{\tilde Q\in\tilde M:d_{\tilde{\mathbb{M}}}(\tilde P,\tilde Q)
<\min_{\operatorname{id}\ne\gamma\in\Gamma}d_{\tilde{\mathbb{M}}}
(\tilde P,\gamma\tilde Q)\right\}\,.
$$
Since $\tilde{\mathcal{O}}_{\gamma\tilde P}
=\gamma\tilde{\mathcal{O}}_{\tilde P}$, $\pi(\tilde{\mathcal{O}}_{\tilde P})$ is independent
of the particular point in $\pi^{-1}\{P\}$ which is chosen and we may define
$$
\mathcal{O}_P:=\pi(\tilde{\mathcal{O}}_{\tilde P})\quad\text{for any}\quad\tilde P\in\pi^{-1}\{P\}\,.
$$
Because we can use $\min$ instead of $\inf$ in defining $\tilde{\mathcal{O}}_{\tilde P}$,
$\tilde{\mathcal{O}}_{\tilde P}$ is an open subset of $\tilde{\mathbb{M}}$
and $\pi:\tilde{\mathcal{O}_P}\rightarrow
\mathcal{O}_P$ is a 1-1, onto, isometric map. Clearly
$\mathcal{O}_P$ is the set of all points in $\mathbb{M}$
so there is a unique geodesic segment from $P$ to the point in question minimizing the distance.
If we replace strict inequality by equality, we obtain
$$
\operatorname{bd}(\tilde{\mathcal{O}}_{\tilde P})=
\left\{\tilde Q\in\tilde M:d_{\tilde{\mathbb{M}}}(\tilde P,\tilde Q)
=\min_{\operatorname{id}\ne\gamma\in\Gamma}d_{\tilde{\mathbb{M}}}
(\tilde P,\gamma\tilde Q)\right\}\,.
$$
Since the cut locus of $\mathbb{M}$ is the set of all points where there are 2 distinct
minimizing geodesic segments from $P$ to the point in question, we obtain
$$
\operatorname{bd}(\mathcal{O}_P)=\pi\left\{\operatorname{bd}(\tilde{\mathcal{O}}_{\tilde P})\right\}=
\mathcal{C}_P\,.
$$
Since the cut-locus has measure 0 in $\mathbb{M}$ and since $M=\overline{\mathcal{O}_P}$,
we obtain
\begin{equation}\label{E1.b}
\operatorname{vol}(\mathbb{M})=\operatorname{vol}(\mathcal{O}_P)\,.
\end{equation}
It is a straightforward to see that
\begin{equation}\label{E1.c}
\iota_{\mathbb{M},P}=\frac12\min_{\operatorname{id}\ne\gamma\in\Gamma}
d_{\tilde{\mathbb{M}}}(\tilde P,\gamma\tilde P)\text{ for any }\tilde P\in\pi^{-1}\{P\}\,.
\end{equation}

Note that $B_{\mathbb{M},P}\subset\mathcal{O}_P$ and in general $\mathcal{O}_P$ is a bigger
open set. Let $\phi_{\tilde{\mathbb{M}}}$ be a non-constant harmonic function on
$\tilde{\mathbb{M}}-\{\tilde P\}$ which is radial from $\tilde P$. Let
$$
\phi_{\mathbb{M}}:=\phi_{\tilde{\mathbb{M}}}\circ\left\{\pi|_{\tilde{\mathcal{O}}_{\tilde P}}\right\}^{-1}\,.
$$
The following result shows that $\mathcal{O}_P=M-\mathcal{C}_P$ is
the maximal domain in $\mathbb{M}$ which admits a non-constant harmonic function
which is radial from $P$.

\begin{theorem}\label{T1.2}\rm Let $\mathbb{M}$ be a non-positively curved complete connected
harmonic space. Adopt the notation established above.
\begin{enumerate}
\item $\phi_{\mathbb{M}}$ is a non-constant function  on $\mathcal{O}_P-\{P\}$
which is smooth  harmonic, and radial from $P$.
\item If $\mathcal{O}$ is an open set in $M$ which contains a point of $\mathcal{C}_P$, then $\mathcal{O}$ does not admit a smooth  non-constant harmonic function which is radial from $P$.\end{enumerate}\end{theorem}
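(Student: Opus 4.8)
The plan is to dispatch the two assertions separately; part~(1) is essentially formal and part~(2) carries the content. For part~(1), I would observe that everything is transported through $\pi|_{\tilde{\mathcal{O}}_{\tilde P}}$, which was already noted to be a bijective isometry of $\tilde{\mathcal{O}}_{\tilde P}$ onto $\mathcal{O}_P$. Because the Laplace--Beltrami operator commutes with isometries and because smoothness and non-constancy are diffeomorphism invariants, $\phi_{\mathbb{M}}=\phi_{\tilde{\mathbb{M}}}\circ\{\pi|_{\tilde{\mathcal{O}}_{\tilde P}}\}^{-1}$ is automatically smooth, harmonic, and non-constant on $\mathcal{O}_P-\{P\}$. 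The only genuine point is radiality, and here I would use the defining inequality of $\tilde{\mathcal{O}}_{\tilde P}$: for $\tilde Q\in\tilde{\mathcal{O}}_{\tilde P}$ the minimum of $d_{\tilde{\mathbb{M}}}(\tilde P,\gamma\tilde Q)$ over $\gamma\in\Gamma$ is attained at $\gamma=\operatorname{id}$, so $r_{\mathbb{M},P}(\pi(\tilde Q))=d_{\tilde{\mathbb{M}}}(\tilde P,\tilde Q)$. Writing $\phi_{\tilde{\mathbb{M}}}=\tilde\eta\circ d_{\tilde{\mathbb{M}}}(\tilde P,\cdot)$ then gives $\phi_{\mathbb{M}}=\tilde\eta\circ r_{\mathbb{M},P}$, which is radial from $P$.

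For part~(2) I would argue by contradiction. Suppose $\psi$ is a smooth non-constant harmonic function, radial from $P$, on a connected open set $\mathcal{O}$ meeting $\mathcal{C}_P$, and fix $Q_0\in\mathcal{C}_P\cap\mathcal{O}$. In non-positive curvature there are no conjugate points, so $Q_0$ is a cut point exactly because there are two distinct minimizing geodesic segments $c_1,c_2$ from $P$ to $Q_0$; set $r_0:=r_{\mathbb{M},P}(Q_0)$, so each $c_i$ has length $r_0$ and $Q_0$ is the first cut point along it. Write $\psi=\eta\circ r_{\mathbb{M},P}$ with $\eta$ smooth. For $s<r_0$ the point $c_i(s)$ lies strictly before the cut point, hence in $\mathcal{O}_P-\{P\}$, where $r_{\mathbb{M},P}(c_i(s))=s$, the distance function is smooth, and the radial Laplacian identity of Section~\ref{S1.3} holds (after lifting to $\tilde{\mathbb{M}}$, where $\exp_{\tilde P}$ is a global diffeomorphism). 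Harmonicity of $\psi$ thus forces $\tfrac{d}{ds}(\Theta_{\mathbb{M},P}\dot\eta)=0$, i.e.\ $\Theta_{\mathbb{M},P}\,\dot\eta$ is constant on $(r_0-\varepsilon,r_0)$. Since $\Theta_{\mathbb{M},P}>0$ for $0<s$ (no conjugate points), this constant vanishes only if $\eta$ is constant near $r_0$, which by real analyticity would force $\psi$ to be constant on $\mathcal{O}$; as $\psi$ is non-constant we conclude $\dot\eta(r_0)\neq0$.

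The contradiction then comes from the smoothness of $\psi$ at the cut point. Along $c_i$ the gradient is $\nabla\psi(c_i(s))=\dot\eta(s)\,\dot c_i(s)$, since $\nabla r_{\mathbb{M},P}(c_i(s))=\dot c_i(s)$ is the outward unit tangent to the minimizing geodesic. Letting $s\uparrow r_0$ and using that $\nabla\psi$ is continuous at $Q_0$ gives
\[
\dot\eta(r_0)\,\dot c_1(r_0)=\nabla\psi(Q_0)=\dot\eta(r_0)\,\dot c_2(r_0)\,.
\]
As $\dot\eta(r_0)\neq0$, this forces $\dot c_1(r_0)=\dot c_2(r_0)$. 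But two distinct minimizing geodesics reaching $Q_0$ must arrive with distinct unit velocities — equal position and velocity at $s=r_0$ would make them the same geodesic by uniqueness — a contradiction. Hence no such $\psi$ exists on any open set meeting $\mathcal{C}_P$, which is part~(2).

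The only delicate step, and the conceptual heart of the proof, is the gradient-matching at $Q_0$: it converts the metric condition ``$Q_0$ has two minimizing geodesics'' into the analytic obstruction that $\nabla\psi$ cannot be single-valued at $Q_0$ unless $\dot\eta(r_0)=0$. This is precisely where the smoothness hypothesis on $\psi$ is indispensable, and where non-positive curvature enters twice — once to guarantee that cut points carry multiple minimizers, and once (via the absence of conjugate points) to ensure $\Theta_{\mathbb{M},P}>0$ so that a non-constant radial profile has nowhere-vanishing derivative. The remaining ingredients — that $\pi|_{\tilde{\mathcal{O}}_{\tilde P}}$ is an isometry, that $\nabla r_{\mathbb{M},P}$ is the outward geodesic tangent, and the integration of the radial ODE — are standard.
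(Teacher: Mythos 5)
Your proof is correct, and it is worth noting where it goes beyond the paper. For part~(1) you argue exactly as the paper does: $\pi|_{\tilde{\mathcal{O}}_{\tilde P}}$ is an isometry onto $\mathcal{O}_P$, harmonicity passes through local isometries, and radiality follows from the identity $r_{\mathbb{M},P}(\pi(\tilde Q))=d_{\tilde{\mathbb{M}}}(\tilde P,\tilde Q)$ on $\tilde{\mathcal{O}}_{\tilde P}$. For part~(2), however, the paper's entire proof is the single observation that $r_{\mathbb{M},P}$ is Lipschitz but not smooth on the cut locus, from which the assertion ``follows''; that sketch leaves implicit precisely the two steps you supply. First, non-smoothness of $r_{\mathbb{M},P}$ by itself does not forbid $\eta\circ r_{\mathbb{M},P}$ from being smooth (take $\eta$ constant, or $\dot\eta$ vanishing at the cut value), so one must invoke harmonicity and non-constancy; you do this through the radial ODE of Section~\ref{S1.3}, giving $\Theta_{\mathbb{M},P}\dot\eta=\text{const}\ne0$, together with unique continuation (real analyticity, which the paper announces it uses implicitly). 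Second, one must turn the presence of a cut point into a genuine contradiction; you do this by gradient-matching along the two distinct minimizing geodesics that meet at $Q_0$ --- these exist exactly because nonpositive curvature rules out conjugate points --- rather than by the route the paper presumably intends, namely that smoothness of $\psi$ with $\dot\eta\ne0$ would force $r_{\mathbb{M},P}=\eta^{-1}\circ\psi$ to be smooth at $Q_0$. The two mechanisms are equivalent in substance, but yours is self-contained: it proves the needed obstruction instead of citing it. One further point in your favor: your argument (specifically the unique-continuation step) requires $\mathcal{O}$ to be connected, which you correctly build into the hypothesis; the theorem as stated says only ``open set,'' and without connectedness (or reading ``non-constant'' locally near the cut point) the assertion can literally fail, e.g.\ by taking $\psi$ constant on a component containing the cut point and non-constant on a distant component whose range of $r_{\mathbb{M},P}$ is disjoint --- a subtlety the paper's one-line proof glosses over.
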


\begin{proof} Since $\pi$ is a local isometry, $\phi_{\mathbb{M}}$ is harmonic. Since $r_{\tilde{\mathbb{M}},\tilde P}(\tilde Q)=
r_{\mathbb{M},\pi(P)}(\pi(Q))$ for $\tilde Q\in\tilde{\mathcal{O}}_{\tilde P}$ and since $\tilde\phi$
is radial from $\tilde P$ in $\tilde{\mathbb{M}}$, we see that $\phi$ is a radial function from $P$
on the open set $\mathcal{O}_P$. This proves Assertion~(1).
Since $r_{\mathbb{M},P}$ is Lipschitz but not smooth on the
cut-locus. Assertion~(2) follows.
\end{proof}

There are other open sets to which $\phi_0$ can be extended as a smooth harmonic
function but $\phi_0$ will no longer be radial there; we refer to Remark~\ref{R3.1} for details.

\subsection{The rank 1 symmetric spaces of negative curvature}\label{S1.6}
There are negative curvature duals of the spaces discussed in Section~\ref{S1.4} that we shall
denote by $\widetilde{\mathbb{S}}^m$ (hyperbolic space),
$\widetilde{\mathbb{CP}^k}$ {(complex hyperbolic space)},
$\widetilde{\mathbb{HP}}^k$ {(quaternionic hyperbolic  space)}, and
$\widetilde{\mathbb{OP}}^2$ {(Cayley hyperbolic plane)}. These are the rank 1 symmetric
spaces of negative curvature; they are all 2-point homogeneous spaces and are
geodesically complete. The curvature tensor of these spaces is obtained by reversing
the sign of the curvature tensor of the corresponding positive curvature example. This fact
will play an important role in the discussion of Section~\ref{S4}. Finally, we note that
any simply-connected 2-point homogeneous space
is either flat or is a rank 1 symmetric space.

If $\mathbb{M}$ is a rank 1 symmetric space with negative curvature, then
the exponential map is a global diffeomorphism so the underlying topology of all these spaces
is Euclidean space; the cut locus is empty.
We adopt the same normalizations as those used to normalize the positive curvature examples.
We replace $\sin$ by $\sinh$ and $\cos$ by $\cosh$ in Equation~(\ref{E1.a}) to obtain
$$
\begin{array}{| l | l | l |}\noalign{\hrule}
\mathbb{M}&\text{d{imension}}&\Theta_{\mathbb{M},P}\\\noalign{\hrule}
\widetilde{\mathbb{S}}^m&m&\sinh(r)^{m-1}
\vphantom{\vrule height 12pt}\\\noalign{\hrule}
\widetilde{\mathbb{CP}}^k&2k&\sinh(r)^{2k-1}\cosh(r)
\vphantom{A_{\vrule height 9pt}^{\vrule height 7pt}}\\\noalign{\hrule}
\widetilde{\mathbb{HP}}^k&4k&\sinh(r)^{4k-1}\cosh^3(r)
\vphantom{A_{\vrule height 9pt}^{\vrule height 7pt}}\\\noalign{\hrule}
\widetilde{\mathbb{OP}}^2&16&\sinh(r)^{15}\cosh^7(r)
\vphantom{A_{\vrule height 9pt}^{\vrule height 7pt}}\\\noalign{\hrule}
\end{array}$$

{We say that $\mathbb{M}$ is modeled on a homogeneous space $\mathbb{M}_1$
if every point of $\mathbb{M}$ has a neighborhood which is isometric to some open set
in $\mathbb{M}_1$.} We will
establish the following result in Section~\ref{S4.2} which gives lower bounds
on the volumes of compact spaces modeled on rank 1 symmetric spaces. It is not
difficult to show that $\mathcal{O}_P-\{P\}$ is of full measure in $M$ and, consequently,
by Theorem~\ref{T1.2} and Equation~(\ref{E1.b}), we obtain on the volume of the maximal domain of $\phi_0$. We refer to
Cahn et. al. \cite{CGW76} for a more general discussion of lower volume bounds
on even dimensional negatively curved
symmetric spaces.
\begin{theorem}\label{T1.4}\rm Let $\mathbb{M}$ be a compact Riemannian manifold.
\begin{enumerate}
\item If the geometry of $\mathbb{M}$ is modeled on ${\widetilde{\mathbb{S}}}^{2k}$, then
$\operatorname{vol}(\mathbb{M})\ge\frac12\operatorname{vol}(\mathbb{S}^{2k})$.
\item If the geometry of $\mathbb{M}$ is modeled on ${\widetilde{\mathbb{CP}}}^k$, then
$\operatorname{vol}(\mathbb{M})\ge\frac1{k+1}\operatorname{vol}(\mathbb{CP}^k)$.
\item If the geometry of $\mathbb{M}$ is modeled on ${\widetilde{\mathbb{HP}}}^k$, then
$\operatorname{vol}(\mathbb{M})\ge\frac1{k+1}\operatorname{vol}(\mathbb{HP}^k)$.
\item If the geometry of $\mathbb{M}$ is modeled on ${\widetilde{\mathbb{OP}}}^2$, then
$\operatorname{vol}(\mathbb{M})\ge\frac1{3}\operatorname{vol}(\mathbb{HP}^k)$.
\end{enumerate}
\end{theorem}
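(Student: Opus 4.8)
The plan is to use the generalized Gauss--Bonnet theorem (Theorem~\ref{T2.2}), exploiting the fact that all four model spaces are even-dimensional. Write $m=2n$ for the dimension of $\mathbb{M}$ and let $e_{\mathbb{M}}$ denote the Chern--Gauss--Bonnet integrand, the Pfaffian of the curvature $2$-form normalized so that $\chi(\mathbb{M})=\int_{\mathbb{M}}e_{\mathbb{M}}$. The integrand $e_{\mathbb{M}}$ is a universal invariant polynomial, homogeneous of degree $n$, in the components of the curvature tensor. Because $\mathbb{M}$ is modeled on a homogeneous space $\mathbb{M}_1$, every point of $\mathbb{M}$ has a neighborhood isometric to an open set of $\mathbb{M}_1$; hence in a suitable orthonormal frame the curvature tensor of $\mathbb{M}$ at each point equals that of $\mathbb{M}_1$, and $e_{\mathbb{M}}$ is a constant multiple $c_1$ of $\operatorname{dvol}_{\mathbb{M}}$. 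Integrating gives
\begin{equation*}
\chi(\mathbb{M})=c_1\operatorname{vol}(\mathbb{M}).
\end{equation*}
If $\mathbb{M}$ is not orientable one first passes to the oriented double cover, which multiplies both $\chi$ and $\operatorname{vol}$ by $2$ and so leaves the identity unchanged.

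Next I would relate $c_1$ to the compact positive-curvature dual. Let $\mathbb{M}_+$ be the rank~$1$ symmetric space of Section~\ref{S1.4} dual to $\mathbb{M}_1$, taken with the normalization of Equation~(\ref{E1.a}); it is a compact homogeneous space, so the same argument yields $\chi(\mathbb{M}_+)=c_+\operatorname{vol}(\mathbb{M}_+)$ for a constant $c_+$. Since the normalizations of Section~\ref{S1.6} match those of Section~\ref{S1.4}, and since the curvature tensor of the negative dual is the negative of that of $\mathbb{M}_+$, homogeneity of the integrand of degree $n$ gives $c_1=(-1)^nc_+$; in particular $|c_1|=|c_+|$. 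As all the relevant Euler characteristics are positive, $c_+=\chi(\mathbb{M}_+)/\operatorname{vol}(\mathbb{M}_+)>0$.

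To conclude, observe that $c_1\ne0$ and $\operatorname{vol}(\mathbb{M})>0$ force $\chi(\mathbb{M})=c_1\operatorname{vol}(\mathbb{M})$ to be a nonzero integer, so $|\chi(\mathbb{M})|\ge1$. Therefore
\begin{equation*}
\operatorname{vol}(\mathbb{M})=\frac{|\chi(\mathbb{M})|}{|c_1|}=\frac{|\chi(\mathbb{M})|}{|c_+|}=|\chi(\mathbb{M})|\,\frac{\operatorname{vol}(\mathbb{M}_+)}{\chi(\mathbb{M}_+)}\ge\frac{\operatorname{vol}(\mathbb{M}_+)}{\chi(\mathbb{M}_+)}.
\end{equation*}
Inserting the standard values $\chi(\mathbb{S}^{2k})=2$, $\chi(\mathbb{CP}^k)=\chi(\mathbb{HP}^k)=k+1$, and $\chi(\mathbb{OP}^2)=3$ yields the four stated inequalities.

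The main obstacle is the step establishing that $e_{\mathbb{M}}$ is an exact constant multiple of the volume form, together with the bookkeeping that gives $|c_1|=|c_+|$; this is where homogeneity of the model and the matching of the two normalizations are essential, and it is the reason the hypothesis is ``modeled on'' rather than merely ``negatively curved'' (the Hopf-type positivity of $(-1)^n\chi$ is automatic here but open in general). The remaining points---the sign $(-1)^n$ coming from the degree of the Pfaffian and the reduction to the oriented case---are routine. This recovers, and is consistent with, the volume bounds of Cahn et al.~\cite{CGW76}.
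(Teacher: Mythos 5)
Your proposal is correct and takes essentially the same route as the paper: both apply the Chern--Gauss--Bonnet formula, use local homogeneity to see the Pfaffian is a constant multiple of the volume form, use the sign reversal of the curvature tensor between dual rank~1 symmetric spaces to get $|\chi(\mathbb{M})|=\chi(\mathbb{M}_+)\operatorname{vol}(\mathbb{M})/\operatorname{vol}(\mathbb{M}_+)$, and conclude from $|\chi(\mathbb{M})|\ge1$ together with the Euler characteristics of Lemma~\ref{L2.1}. Your explicit handling of the non-orientable case via the oriented double cover and of the sign $(-1)^n$ makes precise points the paper passes over silently, but it is the same argument.
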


The estimates of Theorem~\ref{T1.4} will
arise from the Chern-Gauss-Bonnet Formula (see Theorem~\ref{T2.2}) and are not
optimal in certain settings.
We will use the Hirzebruch Signature Formula (see Theorem~\ref{T2.2})
to establish the following result in Section~\ref{S4.3}.

\begin{theorem}\label{T1.5}\rm Let $\mathbb{M}$ be a compact Riemannian manifold. Let
$$
\varepsilon(M):=\left\{\begin{array}{lll}1&\text{if }M\text{ is orientable}\\
\frac12&\text{if }M\text{ is not orientable}\end{array}\right\}\,.
$$
\begin{enumerate}
\item If the geometry of $\mathbb{M}$ is modeled on $\widetilde{\mathbb{CP}}^{2k}$, then
$\operatorname{vol}(\mathbb{M})\ge\varepsilon(M)\operatorname{vol}(\mathbb{CP}^{2k})$.
\item If the geometry of $\mathbb{M}$ is modeled on $\widetilde{\mathbb{HP}}^{2k}$, then
$\operatorname{vol}(\mathbb{M})\ge\varepsilon(M)\operatorname{vol}(\mathbb{HP}^{2k})$.
\item If the geometry of $\mathbb{M}$ is modeled on $\widetilde{\mathbb{OP}}^2$, then
$\operatorname{vol}(\mathbb{M})\ge\varepsilon(M)\operatorname{vol}(\mathbb{OP}^2)$.
\end{enumerate}\end{theorem}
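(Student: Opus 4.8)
The plan is to play the Hirzebruch Signature Formula (Theorem~\ref{T2.2}) off against the fact that the Pontryagin forms are even in the curvature. Let $N$ denote the relevant positively curved model ($\mathbb{CP}^{2k}$, $\mathbb{HP}^{2k}$, or $\mathbb{OP}^2$) and let $\widetilde N$ be its negatively curved dual on which $\mathbb{M}$ is modeled; in each case $\dim N=\dim\widetilde N\equiv 0\pmod 4$, so the signature is defined. Recall that the Hirzebruch $L$-form is a universal polynomial in the Pontryagin forms, and that the $j$-th Pontryagin form is homogeneous of degree $2j$ in the curvature $2$-form $\Omega$. Consequently every monomial in the $L$-form has even total degree in $\Omega$, so the $L$-form is unchanged when the curvature tensor is replaced by its negative. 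Since the curvature tensor of $\widetilde N$ is exactly that of $N$ with the sign reversed, the pointwise $L$-forms of $N$ and of $\widetilde N$ agree in corresponding oriented orthonormal frames.

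First I would evaluate the signature density on the model $N$. Because $N$ is homogeneous, the $L$-form is a constant multiple of the Riemannian volume form, say $L_N=c\,\operatorname{dvol}_N$. Integrating and invoking the Hirzebruch formula gives $c\cdot\operatorname{vol}(N)=\sigma(N)$. In each of the three cases the middle cohomology of $N$ is one–dimensional, generated by a class whose self–intersection is the positive generator of the top cohomology (the $k$-th power of the hyperplane class on $\mathbb{CP}^{2k}$, the $k$-th power of the generator of $H^4$ on $\mathbb{HP}^{2k}$, and the square of the generator of $H^8$ on $\mathbb{OP}^2$), so $\sigma(N)=1$ and hence $c=\operatorname{vol}(N)^{-1}>0$. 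By the evenness observation the same constant governs $\widetilde N$, and therefore any $\mathbb{M}$ locally isometric to $\widetilde N$: at every point $L_{\mathbb{M}}=\operatorname{vol}(N)^{-1}\operatorname{dvol}_{\mathbb{M}}$.

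If $M$ is orientable, the Hirzebruch formula now yields
$$
\sigma(M)=\int_M L_{\mathbb{M}}=\frac{\operatorname{vol}(\mathbb{M})}{\operatorname{vol}(N)}\,.
$$
Since $\operatorname{vol}(\mathbb{M})>0$, the signature $\sigma(M)$ is a positive integer, hence $\sigma(M)\ge1$, and we conclude $\operatorname{vol}(\mathbb{M})\ge\operatorname{vol}(N)=\varepsilon(M)\operatorname{vol}(N)$. If $M$ is not orientable, I would pass to the orientation double cover $\widehat{\mathbb{M}}$, which is a compact orientable Riemannian manifold, is again modeled on $\widetilde N$, and satisfies $\operatorname{vol}(\widehat{\mathbb{M}})=2\operatorname{vol}(\mathbb{M})$. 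Applying the orientable case to $\widehat{\mathbb{M}}$ gives $2\operatorname{vol}(\mathbb{M})\ge\operatorname{vol}(N)$, that is $\operatorname{vol}(\mathbb{M})\ge\tfrac12\operatorname{vol}(N)=\varepsilon(M)\operatorname{vol}(N)$. Taking $N=\mathbb{CP}^{2k}$, $\mathbb{HP}^{2k}$, $\mathbb{OP}^2$ in turn establishes the three assertions.

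The main obstacle is the step that pins down the sign of the constant $c$: everything hinges on knowing that the signature of the positively curved model is strictly positive (in fact $1$), since this is what forces $\sigma(M)>0$ and hence $\sigma(M)\ge1$ by integrality. The evenness of the Pontryagin forms is what lets us transport this positive density from the positive model, where the topology is understood, to the negatively curved setting where $\mathbb{M}$ lives; care is needed only to match orientations so that the scalar identity relating $L$ for $R$ and for $-R$ is applied in a consistent oriented frame.
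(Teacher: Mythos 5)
Your proposal is correct and follows essentially the same route as the paper: exploit the evenness of the $L$-polynomial in the curvature to transfer the signature density from the positively curved dual (where $\operatorname{sign}=1$ by the truncated polynomial cohomology ring), conclude $\operatorname{sign}(M)=\operatorname{vol}(\mathbb{M})/\operatorname{vol}(\mathbb{M}_+)>0$ hence $\ge1$ by integrality, and handle the non-orientable case via the orientation double cover. If anything, your write-up is slightly more careful than the paper's on two small points: you correctly attribute the sign-invariance to the Pontryagin forms being of even degree in the curvature (the paper loosely calls $L_j$ ``quadratic''), and you state the double-cover volume relation $\operatorname{vol}(\widehat{\mathbb{M}})=2\operatorname{vol}(\mathbb{M})$ with the factor on the correct side.
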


\subsection{Outline of the paper}
In Section~\ref{S2}, we discuss the positively curved rank~1 symmetric spaces.
We exhibit quite explicitly the non-constant harmonic function
$\phi_0$ on the punctured disk $B_{\mathbb{M},P}-\{P\}$
for some low dimensional examples and complete the proof of Theorem~\ref{T1.1}. We  discuss in
some detail the domain of $\phi_0$ on a homogeneous lens space
$\mathbb{S}^{2k+1}/\mathbb{Z}_4$, and a quotient
$\mathbb{CP}^{2k}/\mathbb{Z}_2$.
In Section~\ref{S3}, we turn our attention to flat space and discuss the rectangular torus
and the infinite mobius strip.
In Section~\ref{S4}, we exhibit the non-constant harmonic function $\phi_0$ in low dimensions for the
negatively curved symmetric spaces and we complete the proof of Theorems~\ref{T1.4} and \ref{T1.5}.

The basepoint $P$ of the manifold will be fixed for the most part.
To simplify the notation, we shall suppress the dependence in the notation on the point $P$
and set $r_{\mathbb{M}}:=r_{\mathbb{M},P}$, $\Theta_{\mathbb{M}}:=\Theta_{\mathbb{M},P}$,
$\iota_{\mathbb{M}}:=\iota_{\mathbb{M},P}$ and so forth when
no confusion is likely to ensue.

\section{Positive curvature}\label{S2}
Let $\mathbb{M}$ be central harmonic at $P$. In Section~\ref{S1.3}, we discussed the construction of
a non-constant radial harmonic function $\phi_0$ on $B_{\mathbb{M},P}$. In Section~\ref{S2.1}, we exhibit
this function for the rank 1 symmetric spaces of positive curvature in low dimensions. In Section~\ref{S2.2},
we turn our attention to the study of non-simply connected examples $\mathbb{M}/\Gamma$ where $\mathbb{M}$
is a positively curved rank 1 symmetric space and $\Gamma$ is a finite group of isometries acting without
fixed points to
complete the proof of Theorem~\ref{T1.1}. In Section~\ref{S2.3}, we discuss spherical space forms and construct
a homogeneous example $\mathbb{S}^{2k+1}/\mathbb{Z}_4$ for which we give the maximal domain of
$\phi_0$. In
Section~\ref{S2.4} give an
isometric fixed point free action of $\mathbb{Z}_2$ on $\mathbb{CP}^{2k+1}$ and discuss the maximal domain
of $\phi_0$ on $\mathbb{CP}^{2k+1}/\mathbb{Z}_2$.

\subsection{Simply-connected examples}\label{S2.1}
We use Mathematica to determine $\phi_1$ and $\phi_0$ in low dimensions for
the rank 1 symmetric spaces of positive curvature.
\goodbreak\smallbreak\vbox{\centerline{Table 2.1}
\smallbreak\centerline{$\begin{array}{| l | l | l |}\noalign{\hrule}\mathbb{M}&\phi_1&\phi_0\\\noalign{\hrule}
\mathbb{S}^2&\frac1{\sin(r)}&\log \left(\tan \left(\frac{r}{2}\right)\right)\\\noalign{\hrule}
\mathbb{S}^3&\frac1{\sin(r)^2}&-\cot (r)\\\noalign{\hrule}
\mathbb{S}^4&\frac1{\sin(r)^3}&-\frac{1}{8} \csc ^2\left(\frac{r}{2}\right)+\frac{1}{8} \sec ^2\left(\frac{r}{2}\right)+\frac{1}{2} \log \left(\tan \left(\frac{r}{2}\right)\right)\\\noalign{\hrule}
\mathbb{S}^5&\frac1{\sin(r)^4}&-\frac23 \cot (r)-\frac{1}{3} \cot (r) \csc ^2(r)\\
\noalign{\hrule}
\mathbb{CP}^2&\frac1{\sin(r)^3\cos(r)}&-\frac{1}{2} \csc ^2(r)+\log (\tan (r)) \\\noalign{\hrule}
\mathbb{CP}^3&\frac1{\sin(r)^5\cos(r)}&-\frac{1}{4} \csc ^4(r)-\frac12\csc ^2(r)+\log (\tan (r)) \\ \noalign{\hrule}
\mathbb{CP}^4&\frac1{\sin(r)^7\cos(r)}&-\frac{1}{6} \csc ^6(r)-\frac14\csc ^4(r)-\frac12\csc ^2(r)+\log (\tan (r))\\
\noalign{\hrule}
\mathbb{HP}^2&\frac1{\sin(r)^7\cos(r)^3}&\frac{1}{2} \left(-\frac{1}{3} \csc ^6(r)-\csc ^4(r)-3 \csc ^2(r)+\sec ^2(r)\right.\\
&&\phantom{-\frac13}\left.+8 \log (\tan (r))\right)\\ \noalign{\hrule}
\mathbb{HP}^3&\frac1{\sin(r)^{11}\cos(r)^3}&\frac{1}{2} \left(-\frac{1}{5} \csc ^{10}(r)-\frac12\csc ^8(r)
-\csc ^6(r)-2 \csc ^4(r)\right.\\
&&\phantom{\frac12}\left.-5 \csc ^2(r)+\sec ^2(r)+12 \log (\tan (r))\right)\\\noalign{\hrule}
\mathbb{HP}^4&\frac1{\sin(r)^{15}\cos(r)^3}&\frac{1}{2} \left(-\frac{1}{7} \csc ^{14}(r)-\frac13\csc ^{12}(r)
-\frac35 \csc ^{10}(r)-\csc ^8(r)\right.\\
&&\phantom{\frac12}-\frac53 \csc ^6(r)-3 \csc ^4(r)-7 \csc ^2(r)+\sec ^2(r)\\
&&\phantom{\frac12}\left.+16 \log (\tan (r))\vphantom{\frac13}\right)\goodbreak\\\noalign{\hrule}\noalign{\goodbreak\hrule}
\mathbb{OP}^2&\frac1{\sin(r)^{15}\cos(r)^7}&
-\frac{1}{14} \csc ^{14}(r)-\frac13\csc ^{12}(r)-\csc ^{10}(r)-\frac52 \csc ^8(r)\\
&&-\frac{35}6 \csc ^6(r)-14 \csc ^4(r)-42 \csc ^2(r)+\frac16\sec ^6(r)\\
&&+2 \sec ^4(r)+18 \sec ^2(r)+120 \log (\tan (r))
\\ \noalign{\hrule}\end{array}$}}
\goodbreak\medbreak\noindent
For the sphere, since $\sin(r)^{1-m}\sim(\pi-r)^{1-m}$ as $r\rightarrow\pi$,
$\phi_1$ is not integrable on $(\pi-\varepsilon,\pi)$. Consequently, $\phi_0$ does not extend to the antipode and $B_{\mathbb{M},P}-\{P\}$ is the natural domain of definition.
For the remaining rank 1 symmetric spaces, since $\phi_1=\sin(\pi-r)^{m-1}\cos(\pi-r)^{-k}$
is not integrable on $(\frac12\pi-\varepsilon,\frac12\pi)$ for $k>0$, $\phi_0$ tends to infinity as $r\rightarrow\frac12\pi$ and again the natural
domain of $\phi_0$ is $B_{\mathbb{M},P}-\{P\}=\mathbb{M}-\mathcal{C}$.

\subsection{Non simply-connected examples: The proof of Theorem~\ref{T1.1}}\label{S2.2}
Theorem~\ref{T1.1} follows from Theorem~9.3.1 of Wolf~\cite{W2010}.
However, to keep this discussion
as self-contained as possible and to provide some examples, we shall give a different
discussion based on the Chern-Gauss-Bonnet Formula for the most part; an exception being
the case of $\mathbb{HP}^{2\ell+1}$ for $\ell>1$ where we do not know an elementary proof
that $\mathbb{HP}^{2\ell+1}$ does not admit a finite group acting without fixed points.
We note that
since we are in positive curvature,
Synge's Theorem is relevant
although it does not give the full result.

Let $\mathbb{M}$ be a compact Riemannian manifold
of dimension $m$.
If $m=2j$, let $\chi(M)$ be the Euler characteristic of $M$ and,
if $m=4k$ and if $M$ is orientable, let $\operatorname{sign}(M)$
be the signature of $M$. We recall the following well known result:
\begin{lemma}\label{L2.1}\rm
\ \begin{enumerate}
\item The cohomology rings of ${\rm CP}^k$, ${\rm HP}^k$, and ${\rm OP}^2$ are truncated
polynomial rings on generators $x_i$ of degree $i$:
\begin{eqnarray*}
&&H^*({\rm CP}^k)=\mathbb{R}[x_2]/(x_2^{k+1}=0)
=\mathbb{R}\oplus\mathbb{R}\cdot x_2\oplus\dots\dots\dots\oplus\mathbb{R}\cdot x_2^k,\\
&&H^*({\rm HP}^k)=\mathbb{R}[x_4]/(x_4^{k+1}=0)
=\mathbb{R}\oplus\mathbb{R}\cdot x_4\oplus\dots\dots\dots
\oplus\mathbb{R}\cdot x_4^k,\\
&&H^*({\rm OP}^2)=\mathbb{R}[x_8]/(x_8^3=0)
=\mathbb{R}\oplus\mathbb{R}\cdot x_8\oplus\mathbb{R}\cdot x_8^2\,.
\end{eqnarray*}
\item The Euler characteristic and the Hirzebruch signature are given by:
$$
\begin{array}{| l | c  | c | l | c | c |}\noalign{\hrule}
\mathbb{M}&\chi(M)&\operatorname{sign}(M)&
\mathbb{M}&\chi(M)&\operatorname{sign}(M)\\\noalign{\hrule}
\mathbb{S}^{4k+2}&2&-&\mathbb{S}^{4k}&2&0\\ \noalign{\hrule}
\mathbb{CP}^{2k+1}&2k+2&-&\mathbb{CP}^{2k}&2k+1&1\\ \noalign{\hrule}
\mathbb{HP}^{2k+1}&2k+2&0&\mathbb{HP}^{2k}&2k+1&1\\ \noalign{\hrule}
\mathbb{OP}^2&3&1& & &\\ \noalign{\hrule}
\end{array}
$$
\end{enumerate}\end{lemma}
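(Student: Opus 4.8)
The plan is to treat the two assertions separately, establishing the ring structures first and then reading off the numerical invariants from them. For Assertion~(1) I would invoke standard algebraic topology. The computation of $H^*(\mathbb{CP}^k)$ follows from the Gysin sequence of the circle bundle $S^1\rightarrow S^{2k+1}\rightarrow\mathbb{CP}^k$, or equivalently from the CW decomposition with a single cell in each even dimension $0,2,\dots,2k$; the generator $x_2$ may be taken to be the first Chern class of the tautological line bundle, and the relation $x_2^{k+1}=0$ holds because $H^{2k+2}(\mathbb{CP}^k)=0$. The analogous computation for $H^*(\mathbb{HP}^k)$ uses the bundle $S^3\rightarrow S^{4k+3}\rightarrow\mathbb{HP}^k$ and produces a generator $x_4$ in degree~$4$ with $x_4^{k+1}=0$. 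The ring $H^*(\mathbb{OP}^2)$ is more delicate, since the Cayley plane admits no analogous Hopf fibration, but its CW structure with cells in dimensions $0,8,16$ is classical and yields $\mathbb{R}[x_8]/(x_8^3)$.

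With the ring structures in hand, Assertion~(2) becomes largely bookkeeping. Since in every case the cohomology is concentrated in even degrees, all odd Betti numbers vanish and the Euler characteristic reduces to the total dimension of the cohomology, i.e. the number of surviving powers of the generator. Thus $\chi(\mathbb{CP}^{2k+1})=2k+2$ and $\chi(\mathbb{CP}^{2k})=2k+1$, with the quaternionic cases and $\mathbb{OP}^2$ handled identically; the spheres contribute $\chi=2$, their cohomology being one-dimensional in degree $0$ and in the top degree only.

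For the signature, which is defined only when $\dim M\equiv0\pmod4$, the relevant object is the cup-product pairing on the middle-dimensional cohomology $H^{m/2}(M)$. The key observation is that for $\mathbb{S}^{4k}$ (with $k\ge1$) and for $\mathbb{HP}^{2k+1}$ the middle degree carries no cohomology---in the sphere case because cohomology lives only in degrees $0$ and $4k$, and in the quaternionic case because $4k+2$ is not divisible by~$4$---so the pairing is trivial and $\operatorname{sign}=0$. By contrast, for $\mathbb{CP}^{2k}$, $\mathbb{HP}^{2k}$, and $\mathbb{OP}^2$ the middle cohomology is one-dimensional, spanned by $x_2^k$, $x_4^k$, and $x_8$ respectively, and the truncated-polynomial relation shows that the square of this generator is precisely the top class. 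Hence the intersection form has rank one and is positive with respect to the standard orientation, giving $\operatorname{sign}=1$.

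The main obstacle I anticipate is the $\mathbb{OP}^2$ case: unlike the complex and quaternionic projective spaces it admits no Hopf-type fibration to drive the cohomology computation, so I would rely on the known CW structure of the Cayley plane together with the fact that $x_8^2$ generates $H^{16}(\mathbb{OP}^2)$. Once that ring structure is granted, the entries $\chi=3$ and $\operatorname{sign}=1$ follow exactly as in the complex and quaternionic cases.
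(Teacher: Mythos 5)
The paper gives no proof of this lemma at all: it is ``recalled'' as a well-known result, and the citations to Chern and Hirzebruch that immediately follow refer to Theorem~2.2 (Chern--Gauss--Bonnet and the signature formula), not to this statement. So your argument is not paralleling anything in the paper; it supplies what the paper leaves to the literature, and it does so correctly. The Gysin sequence of $S^1\rightarrow S^{2k+1}\rightarrow\mathbb{CP}^k$ (resp.\ $S^3\rightarrow S^{4k+3}\rightarrow\mathbb{HP}^k$) shows that cupping with the Euler class is an isomorphism in the intermediate range, which yields the multiplicative and not merely the additive structure; the Euler characteristics are then Betti-number counts, and the signature entries follow because the middle cohomology is either zero (for $\mathbb{S}^{4k}$, and for $\mathbb{HP}^{2k+1}$, whose middle degree $4k+2$ is not a multiple of $4$) or one-dimensional with square equal to the top class. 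One point deserves tightening: for $\mathbb{OP}^2$ you propose to ``rely on the fact that $x_8^2$ generates $H^{16}$,'' but that fact \emph{is} the ring-structure assertion you are trying to establish, so as written this step is circular. The clean repair is Poincar\'e duality: $\mathbb{OP}^2$ is a closed, simply connected (hence orientable) $16$-manifold, so the cup-product pairing $H^8\times H^8\rightarrow H^{16}$ is nondegenerate; since the CW structure with cells in dimensions $0,8,16$ shows $H^8$ is one-dimensional, nondegeneracy forces $x_8^2\neq0$, i.e.\ $x_8^2$ spans $H^{16}$. The same nondegeneracy observation is what underwrites your claim that the rank-one intersection forms of $\mathbb{CP}^{2k}$, $\mathbb{HP}^{2k}$, and $\mathbb{OP}^2$ are $\langle\pm1\rangle$, with the standard complex/quaternionic orientations giving $+1$.
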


Let $\operatorname{Pf}_j$ be the Pfaffian and let $L_k$ be the Hirzebruch
polynomial in the curvature of $\mathbb{M}$. We refer to Chern~\cite{C44} for the proof of Assertion~(1)
and to Hirzebruch~\cite{H78} for the proof of Assertion~(2) in the following result.

\begin{theorem}\label{T2.2}\rm\ Let $\mathbb{M}$ be a compact Riemannian manifold
of dimension $m$.
Let $\pi:\mathbb{M}_1\rightarrow\mathbb{M}$ be a finite $\ell$-fold Riemannian cover.
\begin{enumerate}
\item If $m=2k$, then
$\chi(M)=\int_M\operatorname{Pf}_k(\mathbb{M})$ and thus $\chi(M_1)=\ell\cdot\chi(M)$.
\item If $m=4k$ and if $M$ is orientable, then
$\operatorname{sign}(M)=\int_ML_k(\mathbb{M})$ and thus
$\operatorname{sign}(M_1)=\ell\cdot\operatorname{sign}(M)$.
\end{enumerate}\end{theorem}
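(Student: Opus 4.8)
The two displayed integral formulas are classical and I would not reprove them: the identity $\chi(M)=\int_M\operatorname{Pf}_k(\mathbb{M})$ is the Chern-Gauss-Bonnet formula \cite{C44} and $\operatorname{sign}(M)=\int_ML_k(\mathbb{M})$ is the Hirzebruch Signature Formula \cite{H78}. The plan is therefore to take these as given and to deduce the multiplicativity statements $\chi(M_1)=\ell\cdot\chi(M)$ and $\operatorname{sign}(M_1)=\ell\cdot\operatorname{sign}(M)$, which are what is actually used in Sections~\ref{S4.2} and \ref{S4.3}.

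The key point is that both integrands are \emph{pointwise universal expressions in the curvature}. At each point the Pfaffian $\operatorname{Pf}_k$ is a fixed polynomial in the components of the curvature tensor, weighted by the Riemannian volume density, and the Hirzebruch class $L_k$ is a fixed polynomial in the Pontryagin forms, hence again a local expression in the curvature. Neither integrand sees any global feature of $\mathbb{M}$. Since a finite Riemannian cover $\pi:\mathbb{M}_1\to\mathbb{M}$ is a local isometry, the curvature tensor of $\mathbb{M}_1$ at $\tilde x$ is the pullback under $\pi$ of the curvature tensor of $\mathbb{M}$ at $\pi(\tilde x)$, and consequently I obtain the identities of top-degree integrands
$$
\operatorname{Pf}_k(\mathbb{M}_1)=\pi^*\operatorname{Pf}_k(\mathbb{M})\quad\text{and}\quad L_k(\mathbb{M}_1)=\pi^*L_k(\mathbb{M})\,.
$$

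First I would dispose of the orientation bookkeeping. For Assertion~(1) no orientability is needed, because the Pfaffian integrand is naturally a density and may be integrated over $M$ whether or not $M$ is orientable; this is why the statement makes no orientability hypothesis. For Assertion~(2) one assumes $M$ is orientable, and then $M_1$ inherits a pullback orientation under $\pi$ for which $\pi$ is orientation preserving, so $\operatorname{sign}(M_1)$ is defined. The remaining step is the change-of-variables formula for finite covers: for any top-degree form or density $\omega$ on $M$ one has $\int_{M_1}\pi^*\omega=\ell\int_M\omega$, since $\pi$ is an $\ell$-to-one local diffeomorphism. Combining this with the integral formulas and the pullback identities yields
$$
\chi(M_1)=\int_{M_1}\operatorname{Pf}_k(\mathbb{M}_1)=\int_{M_1}\pi^*\operatorname{Pf}_k(\mathbb{M})=\ell\int_M\operatorname{Pf}_k(\mathbb{M})=\ell\,\chi(M)\,,
$$
and the verbatim computation with $L_k$ in place of $\operatorname{Pf}_k$ gives $\operatorname{sign}(M_1)=\ell\,\operatorname{sign}(M)$. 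The genuinely hard input is the pair of index-theoretic integral formulas, which I import from \cite{C44,H78}; once these are available the multiplicativity is essentially formal, the only delicate point being the orientation convention in the signature case handled above.
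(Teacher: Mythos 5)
Your proposal is correct and takes essentially the same route as the paper: the paper likewise imports the integral formulas from Chern \cite{C44} and Hirzebruch \cite{H78} and treats the multiplicativity $\chi(M_1)=\ell\cdot\chi(M)$, $\operatorname{sign}(M_1)=\ell\cdot\operatorname{sign}(M)$ as an immediate consequence of the locality of the curvature integrands under the $\ell$-to-one local isometry $\pi$, which is exactly the deduction (including the density/orientation bookkeeping) that you write out.
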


Theorem~\ref{T1.1} will follow from the following result.

\begin{lemma}\label{L2.3}\rm\
Let $\Gamma$ be a finite group which acts without fixed points on a compact
manifold $M$ of dimension $m=2\ell\cdot n$. Assume that
\begin{equation}\label{E2.a}
H^*(M)=\mathbb{R}[x_{2\ell}]/(x_{2\ell}^{n+1}=0)=\mathbb{R}\oplus\mathbb{R}x_{2\ell}\oplus\dots\oplus
\mathbb{R}x_{2\ell}^n
\end{equation}
is a truncated polynomial ring where $x_{2\ell}\in H^{2\ell}(M)$.
\begin{enumerate}
\item If $n$ is even, then $\Gamma$ is trivial.
\item If $n$ is odd, then either $\Gamma$ is trivial or $\Gamma=\mathbb{Z}_2$ and $\mathbb{M}/\Gamma$ is not
orientable.
\item If $M\in\{\mathbb{CP}^{2k},\mathbb{HP}^{2k}\}$ for $k\ge1$
or $M=\mathbb{OP}^2$,
then $\Gamma$ is trivial.
\item If $M\in\{\mathbb{CP}^{2k+1},\mathbb{HP}^{2k+1}\}$, then either $\Gamma$
trivial or  $\Gamma=\mathbb{Z}_2$ and $M/\Gamma$ is not orientable.
\end{enumerate}\end{lemma}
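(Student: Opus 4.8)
The plan is to deduce all four assertions from the Lefschetz fixed point theorem, using the ring structure of $H^\ast(M)$ and little else. First I would record the structural consequences of the hypothesis. Since $H^\ast(M)$ is a truncated polynomial ring on a generator in the even degree $2\ell$, the real cohomology is concentrated in even degrees, with $H^{2\ell j}(M)=\mathbb{R}\cdot x_{2\ell}^j$ for $0\le j\le n$ and all remaining groups zero; in particular $\chi(M)=n+1$ and, as $H^0(M)=\mathbb{R}$, $M$ is connected. The top group $H^{2\ell n}(M)=\mathbb{R}\cdot x_{2\ell}^n$ lies in degree $m=2\ell n$ and is nonzero, so the closed manifold $M$ is orientable. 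These are the only facts about $M$ that I need.

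Next I would fix a nontrivial $\gamma\in\Gamma$. Because the action is free, $\gamma$ has no fixed points and so the Lefschetz number $L(\gamma)=\sum_i(-1)^i\operatorname{Tr}(\gamma^\ast\mid H^i(M))$ vanishes. The induced map $\gamma^\ast$ is a ring automorphism of finite order, so on the one-dimensional space $H^{2\ell}(M)$ it is multiplication by a scalar $\lambda$ with $\lambda^d=1$ for some $d\ge1$, whence $\lambda\in\{1,-1\}$; multiplicativity then gives $\gamma^\ast x_{2\ell}^j=\lambda^j x_{2\ell}^j$. Since the odd cohomology vanishes,
$$
0=L(\gamma)=\sum_{j=0}^n\lambda^j\,.
$$
If $\lambda=1$ this sum is $n+1\ne0$, a contradiction; hence every nontrivial $\gamma$ has $\lambda=-1$, and freeness requires $\sum_{j=0}^n(-1)^j=0$. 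This alternating sum equals $1$ when $n$ is even and $0$ when $n$ is odd, so no nontrivial element can act freely when $n$ is even, which is Assertion~(1).

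For $n$ odd I would organize the eigenvalues into the map $\Gamma\to\{\pm1\}$ sending $\gamma$ to $\lambda_\gamma$; this is a homomorphism because $(\gamma_1\gamma_2)^\ast=\gamma_2^\ast\gamma_1^\ast$ on $H^{2\ell}(M)$ and the target is abelian. Its kernel consists of elements acting by $\lambda=1$, which by the previous step cannot be nontrivial free maps, so the kernel is trivial and $\Gamma$ embeds into $\mathbb{Z}_2$; thus $\Gamma$ is trivial or $\Gamma=\mathbb{Z}_2$. In the latter case the generator acts on $H^m(M)=\mathbb{R}\cdot x_{2\ell}^n$ by $\lambda^n=(-1)^n=-1$, hence is orientation-reversing, and a free orientation-reversing involution of an orientable manifold has non-orientable quotient; this proves Assertion~(2). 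Assertions~(3) and~(4) then follow by reading $\ell$ and $n$ off Lemma~\ref{L2.1}: the spaces $\mathbb{CP}^{2k},\mathbb{HP}^{2k},\mathbb{OP}^2$ have $n$ even, so~(1) applies, while $\mathbb{CP}^{2k+1}$ and $\mathbb{HP}^{2k+1}$ have $n$ odd, so~(2) applies.

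The delicate points are the two standard facts used without proof: that a finite-order automorphism of a one-dimensional real vector space is $\pm1$, and that orientation-reversal obstructs descent of the orientation through the covering $M\to M/\Gamma$; everything else is bookkeeping with $L(\gamma)$. I note that Assertions~(1) and~(3) can instead be obtained from Theorem~\ref{T2.2} alone: for $n$ even, $\chi(M)=n+1$ is odd, so $|\Gamma|$ is odd and $M/\Gamma$ is orientable, while the middle class $x_{2\ell}^{n/2}$ has nonzero self-intersection, giving $\operatorname{sign}(M)=\pm1$, and multiplicativity of the signature forces $|\Gamma|=1$. This characteristic-number route does not bound $\Gamma$ for $\mathbb{HP}^{2\ell+1}$, where $\chi=n+1$ is even and $\operatorname{sign}=0$ by Lemma~\ref{L2.1}; that is precisely the case the authors flag as resisting the elementary argument, and it is handled uniformly by the Lefschetz computation above.
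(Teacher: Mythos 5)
Your proof is correct, but it takes a genuinely different route from the paper's. The paper first averages a metric so that $\Gamma$ acts by isometries, invokes Theorem~\ref{T2.2} (Chern--Gauss--Bonnet) to get $\chi(M)=|\Gamma|\cdot\chi(M/\Gamma)$, hence $|\Gamma|$ divides $n+1$ and $\varepsilon(T)^{n+1}=1$, so $\varepsilon(T)=\pm1$; when all $\varepsilon(T)=1$ it identifies $H^*(M/\Gamma)$ with the $\Gamma$-invariant cohomology $H^*(M)^{\Gamma}=H^*(M)$, forcing $\chi(M/\Gamma)=\chi(M)$ and hence $\Gamma$ trivial, and for $n$ odd it runs that same argument on $\ker(\varepsilon)$ to embed $\Gamma$ in $\mathbb{Z}_2$. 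You instead apply the Lefschetz fixed point theorem to each nontrivial $\gamma$: freeness gives $0=L(\gamma)=\sum_{j=0}^n\lambda_\gamma^j$ with $\lambda_\gamma=\pm1$ by finite order, which immediately forces $\lambda_\gamma=-1$ and $n$ odd; the homomorphism $\gamma\mapsto\lambda_\gamma$ then has trivial kernel, so $\Gamma\hookrightarrow\mathbb{Z}_2$. The orientability conclusion is handled the same way in both proofs (the generator acts by $(-1)^n=-1$ on the top class). What your route buys: no smoothing or averaging of the action, no use of the quotient until the final orientability step, and no transfer identification $H^*(M/\Gamma)\cong H^*(M)^{\Gamma}$ --- one standard tool does all the group-theoretic work, and it applies verbatim to merely continuous free actions. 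What the paper's route buys: it stays inside the Gauss--Bonnet framework of Theorem~\ref{T2.2}, which the paper deliberately showcases and reuses for Theorems~\ref{T1.4} and~\ref{T1.5}, and it produces the divisibility of $\chi(M)$ by $|\Gamma|$ that is exploited again in Section~\ref{S2.3}. One small correction to your closing remark: what the authors flag as resisting an elementary argument is not the bound $\Gamma\subseteq\mathbb{Z}_2$ for $\mathbb{HP}^{2k+1}$ --- both your Lefschetz computation and the paper's Lemma~\ref{L2.3} establish that --- but the stronger statement (Wolf, Theorem 9.3.1) that $\mathbb{HP}^{2k+1}$ admits no fixed point free isometric $\mathbb{Z}_2$ action at all; your argument does not settle that either.
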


\begin{proof} By averaging a Riemannian metric on $M$ over the group $\Gamma$, we may assume without loss
of generality that $\Gamma$ acts by isometries. Since $x_{2\ell}^j$ has even degree for all $j$,
Equation~(\ref{E2.a}) yields $\chi(M)=n+1$.
By Theorem~\ref{T2.2}, $\chi(M)=|\Gamma|\cdot\chi(M/\Gamma)$
and consequently $|\Gamma|$ divides $n+1$.
Let $T\in\Gamma$. We may express $T^*x_{2\ell}=\varepsilon(T)x_{2\ell}$ where $\varepsilon(T)\in\mathbb{R}$.
Since $T^{|\Gamma|}=\operatorname{id}$ and since $|\Gamma|$ divides $n+1$,
$T^{n+1}=\operatorname{id}$. We show $\varepsilon(T)=\pm1$ by computing:
$$
x_{2\ell}=T^*(\operatorname{id})x_{2\ell}=(T^*)^{n+1}(x_{2\ell})=\varepsilon^{n+1}(T)x_{2\ell}\,.
$$

Suppose that $n$ is even. Since $\varepsilon(T)^{n+1}=1$,
this implies $\varepsilon(T)=1$ and consequently $T^*x_{2\ell}^j=x_{2\ell}^j$ for all $j$.
Since the cohomology of $M/\Gamma$ can be identified
with the $\Gamma$-invariant cohomology of $M$, $H^*(M/\Gamma)=H^*(M)$ and consequently
$\chi(M/\Gamma)=\chi(M)$. This implies $\Gamma$ is trivial and completes the proof of Assertion~(1).

Suppose that $n$ is odd and $\Gamma$ is non-trivial.
The map $T\rightarrow\varepsilon(T)$ is a group homomorphism from $\Gamma$ to
$\mathbb{Z}_2=\{\pm1\}$. Let $\Gamma_0=\ker(\varepsilon)$ and let $M_0:=M/\Gamma_0$. The same argument
given to prove Assertion~(1) now shows $M_0=M$ and hence $\ker(\varepsilon)$ is trivial. This
implies $\Gamma=\mathbb{Z}_2$ and that if $T\ne\operatorname{id}$, $T^*(x_{2\ell})=-x_{2\ell}$. We then have
$T^*(x_{2\ell}^n)=-x_{2\ell}^n$ so $T$ reverses the orientation and $M/\Gamma$ is not orientable.
This completes the proof of
Assertion~(2). Assertions~(3) and (4) then follow from Assertion~(1) and Lemma~\ref{L2.1}.
\end{proof}

\subsection{Spherical space forms}\label{S2.3} We shall say that
$\mathbb{M}$ is a spherical space form if $\mathbb{M}$ has constant sectional
curvature $+1$ or, equivalently, if the universal cover of $\mathbb{M}$ is the sphere
$\mathbb{S}^m$ for $m\ge2$.

Let $\Gamma\subset O(m+1)$ be a non-trivial finite subgroup of the orthogonal group
acting by isometries on $\mathbb{S}^m$ without
fixedpoints. If $m$ is even, then $\chi({{{\mathbb{S}}}}^m)=2$ so $|\Gamma|$ divides $2$ by
Theorem~\ref{T2.2} and hence $|\Gamma|=2$
since $\Gamma$ is assumed non-trivial. Let $\gamma$ be the non-trivial element of $\Gamma$. Since $\gamma^2=\operatorname{id}$ and
$\gamma$ is fixed point free, $\gamma=-\operatorname{id}$ and $\mathbb{M}=\mathbb{RP}^m$.
This is a homogeneous space, so the point in question is irrelevant.
The diameter of {{$\mathbb{RP}^m$}} is
$\frac\pi2$, the cut-locus is $\mathbb{RP}^{m-1}$, $\iota_{\mathbb{RP}^m,P}=\frac\pi2$, and
$B_{\mathbb{M},P}$ is the maximal domain of definition for $\phi_0$.

If $m=2n+1$, the situation is very different. The possible groups and group actions which can arise have been classified by Wolf~\cite{W2010}.
There is one example which will play an important role in our future development.
\begin{example}\label{Ex2.4}\rm
If we identify $\mathbb{R}^{2k+2}=\mathbb{C}^{k+1}$
and $\mathbb{Z}_4=\{\pm1,\pm i\}$, then complex multiplication defines a fixed point free isometric action of $\mathbb{Z}_4$ on
${\mathbb{S}}^{2k+1}$ and we let {{$\mathbb{M}=\mathbb{S}^{2k+1}/\mathbb{Z}_4$}}.
Alternatively, let $T$ generate $\mathbb{Z}_4$.
We can define the action in purely real coordinates by setting
\begin{equation}\label{E2.b}
T(x_1,x_2,\dots,x_{2k+2})=(-x_2,x_1,\dots,-x_{2k+2},x_{2k+1})\,.
\end{equation}
We then have a sequence of 2-fold {Riemannian} covering projections
${{\mathbb{S}^{2k+1}}}\rightarrow\mathbb{RP}^{2k+1}$ and
$\mathbb{RP}^{2k+1}\rightarrow\mathbb{M}$ so we can equally
well regard $\mathbb{M}=\mathbb{RP}^{2k+1}/\mathbb{Z}_2$.
The unitary group acts transitively on $\mathbb{S}^{2k+1}$ {by isometries} and commutes with
this action of $\mathbb{Z}_4$ so $\mathbb{RP}^{2k+1}$ and $\mathbb{M}$
are homogeneous spaces. We take as basepoint
$P=(1,0,\dots,0)$ in $\mathbb{R}^{2k+2}$. Let $\xi$ be a unit vector
which is perpendicular to $P$.
$\sigma(t):=\cos(t)P+\sin(t)\xi$ is a unit speed
geodesic from $P$ with $\dot\sigma(0)=\xi$. It now follows that
\begin{eqnarray*}
&&r_{\mathbb{S}^{2k+1},P}(x^1,\dots,x^{2k+2})=\arccos(x^1)\quad\text{so}\\
&&B_{\mathbb{M},P}=\{\vec x\in{S}^m:x^1>x^2\}\cap\{\vec x\in{S}^m:x^1>-x^2\}\,;
\end{eqnarray*}
the condition $x^1>0$ imposed by $T^2$ is then immediate. This is the intersection of the two hemispheres centered at $(\frac1{\sqrt2},\pm\frac1{\sqrt2},0,\dots,0)$;
$\iota_{\mathbb{M},P}=\arccos(\frac1{\sqrt2})=\frac\pi4$. We present a picture below (where the $x^1$ axis is vertical)
of what results when we take a slice by
setting $x^4=\dots=x^{2k+2}=0$; this picture motivates the terminology {\it lens spaces}.
The $90$ degree rotation in the $(x^1,x^2)$ plane
identifies one great circle in boundary of the fundamental domain with the other. The fundamental domain is in light blue; the ball of radius
$\frac\pi4$ about the north pole is in dark blue.
\smallbreak\vbox{\centerline{Picture 2.1}\smallbreak\centerline
{\includegraphics[height=3cm,keepaspectratio=true]{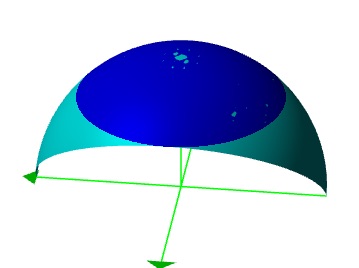}}}
\end{example}

\begin{remark}\rm Let $\mathbb{M}=\mathbb{S}^{2k+1}/\Gamma$
where $\Gamma$ is a finite subgroup of the orthogonal group $O(2k+2)$.
If $\mathbb{M}$ is homogeneous, then, up to conjugation, either $\pi_1(\mathbb{M})$ is the group of units
of order $q$ in $\mathbb{C}$
acting on $\mathbb{S}^{2k+1}\subset\mathbb{C}^{k+1}$ by complex multiplication
 or $m\equiv3\mod4$ and the group in question is one of 4 exceptional groups;
the spherical space forms are rarely homogeneous. We refer to Theorem 11.6.1 of Wolf~\cite{W61}
for details. Similarly, we refer to Wolf~\cite{W61} Corollary 10.6 to
see that if $\mathbb{M}$ is a connected homogeneous manifold of constant negative sectional
curvature, then $\mathbb{M}$ is hyperbolic space. Thus the examples
$\widetilde{\mathbb{S}}^m/\Gamma$ are never homogeneous.
\end{remark}

\subsection{An isometric fixed point free action of $\mathbb{Z}_2$ on $\mathbb{CP}^{2k+1}$}\label{S2.4}Lemma~\ref{L2.3} is a theorem in topology; no assumption on the metric is assumed. It leaves open
the question of finding fixed point free actions of $\mathbb{Z}_2$ on $\mathbb{CP}^{2k+1}$ and
$\mathbb{HP}^{2k+1}$ for $k\ge1$. In the following extended example, we construct an isometric
fixed point free action of $\mathbb{Z}_2$ on $\mathbb{CP}^{2k+1}$ by generalizing the construction
of Example~\ref{Ex2.4}. We note that Theorem 9.3.1 of Wolf~\cite{W2010} shows that
there is no fixed point
free action isometric action of $\mathbb{Z}_2$ on $\mathbb{HP}^{2k+1}$ for $k\ge1$; we know of no elementary proof
of this fact.

Let $\vec z=(z^1,\dots,z^{2k+2})\in S^{4k+3}\subset\mathbb{C}^{2k+2}$ and
let $\langle\vec z\rangle$ denote homogeneous coordinates on $\mathbb{CP}^{2k+1}$ where we identify
$\langle\vec z\rangle$ with
$\langle \mu\vec z\rangle$ for $\mu\in {{{S}^1}}\subset\mathbb{C}$.
We can construct a fixed point free isometry
$T$ of order 4
on ${{\mathbb{S}}}^{4k+3}\subset\mathbb{C}^{2k+1}$ by generalizing Equation~
(\ref{E2.b}) to define
$$
T(\vec z)=(-\bar z_2,\bar z_1,\dots,-\bar z_{2k+2},\bar z_{2k+1})\,.
$$
Since $T(c\vec z)=\bar c\,T(\vec z)$,
$T$ descends to $\mathbb{CP}^{2k+1}$ and we can define
$$
\langle T\rangle(\langle\vec z\rangle)=\langle T\vec z\rangle\,.$$
Since $T^2(\vec z)=-\vec z$, $\langle T\rangle^2\langle\vec z\rangle=\langle\vec z\rangle$ and
$\langle T\rangle$ is idempotent. If we assume $T$
has a fixed point projectively, then $z_1=-c\bar z_2$ and $z_2=c\bar z_1=-|c|^2z_2$ for $c\ne0$.
Thus $z_1=z_2=0$ and we conclude
inductively $\langle\vec z\rangle=\langle\vec 0\rangle$ which is, of course not possible. We take, therefore,
$\mathbb{M}=\mathbb{CP}^{2k+1}/\mathbb{Z}_2$ as our exemplar.
Alternatively, we may
{identify ${{\mathbb{C}^{2k+2}}}$ with $\mathbb{H}^{k+1}$}. We then have $T(\vec x)=j\cdot\vec x$
so the quaternions make their appearance. The right action by the symplectic group acts
transitively on the unit sphere in $\mathbb{H}^{k+1}$ and commutes with left multiplication
by the quaternions. Consequently, $\mathbb{M}$ is a homogeneous space.

We investigate the distance functions on $\mathbb{S}^{4k+3}$ and on
$\mathbb{CP}^{2k+1}$. We take as basepoint the north pole $P=(1,0,\dots,0)$; the particular
point in question is irrelevant as we are dealing with homogeneous spaces.
If $\vec\xi\in P^\perp$ with $\|\vec\xi\|=1$,
then the curve
$$
\sigma(t):=\cos(t)P+\sin(t)\vec\xi\quad\text{for}\quad t\in(-\pi,\pi)
$$
is a unit speed
geodesic from $P$ with initial direction $\xi$.
Thus
\begin{eqnarray*}
&&d_{\mathbb{S}^{2k+1}}(P,\sigma(t))=t=\arccos(x^1)\quad\text{so}\quad
r_{\mathbb{S}^{2k+1},P}(\vec z)=\arccos(\Re(z^1))\\
&&r_{\mathbb{CP}^k,\langle P\rangle}\langle\vec z\rangle
=\min_{\lambda\in S^1}r_{\mathbb{S}^{2k+1}}(\lambda\vec z)=\arccos(|z^1|)\,.
\end{eqnarray*}
Let $\pi:\mathbb{CP}^{2k+1}\rightarrow\mathbb{M}:=\mathbb{CP}^{2k+1}/\mathbb{Z}_2$ be the natural projection. We then have
$$
r_{\mathbb{M},\pi\langle P\rangle}\pi(\langle\vec z\rangle)=\arccos(\min(|z^1|,|z^2|))\,.
$$
This shows that $\iota_{\mathbb{M},P}=\frac\pi4$ and the maximal domain of $r_{\mathbb{M}}$ is given by the open set
$$\{\langle t,z^2,\dots,z^k\rangle:|z^2|<t\}\subset\{(t,z^2,\dots,z^k):t^2+\|(z^2,\dots,z^k)\|^2=1\}\,.$$
The requirement that the first component is positive normalizes the point of projective space and the requirement that the action of $\mathbb{Z}_2$
is suitably normalized is reflected in the requirement $|z^2|<t$. We present a picture of the
slice obtained by taking $z^3=\dots=z^k=0$; the $t$ axis is up; it is a 45 degree cone, shown in
dark blue, about
the z-axis where the boundary circle is identified by a 90 degree rotation; it is $\mathbb{RP}^2$
topologically speaking. The full sphere is shown in gray but is not part of $\mathbb{M}$.
\smallbreak\vbox{\centerline{Picture 2.2}\smallbreak
\centerline{\includegraphics[height=3cm,keepaspectratio=true]{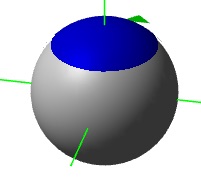}}}

Previously, we let ${\rm CP}^{2k+1}={\rm S}^{4k+3}/{\rm S}^1$. We now regard
$$\rm{CP}^{2k+1}=\{{\mathbb C}^{2k+2}-\{0\}\}/\{{\mathbb C}-\{0\}\}
$$
and introduce corresponding homogeneous coordinates $\langle\vec z\rangle$ for
$\vec z\in\mathbb{C}^{2k+2}-\{0\}$.
If $k=0$, then $\mathbb{CP}^1=\mathbb{S}^2=\mathbb{C}\cup\{\infty\}$ is the Riemann sphere and the map is
$\langle 1,w^2\rangle\rightarrow\langle- \bar w^2,1\rangle =\langle1,-\frac1{\bar w^2}\rangle$ away
from $\langle1,0\rangle$.
This is anti-holomorphic and hence reverses the orientation. More generally, we have
$$
\langle T\rangle\left\{\left\langle 1,w^2,\dots,w^{2\ell+2}\right\rangle\right\}
=\left\langle1,-\frac1{\bar w^2},\frac{\bar w^4}{\bar w^2},
-\frac{\bar w^3}{\bar w^2},\dots\right\rangle\quad\text{for}\quad w^2\ne0\,.
$$
This is an anti-holomorphic function of $2k+1$ complex variables
and hence reverses the orientation as expected.
\begin{remark}\rm Theorem~9.3.1 of Wolf~\cite{W2010} shows that any other
compact Riemannian manifold which is not simply-connected and which is modeled
on $\mathbb{CP}^{2k+1}$ for $k>0$ is in fact isometric to the example described above.
\end{remark}

\section{Flat space}\label{S3}If $\mathbb{M}$ is a simply-connected complete harmonic space with
scalar curvature $\tau=0$, then $\mathbb{M}$ is flat and
$\mathbb{M}=(\mathbb{R}^m,g_e)$ is isometric Euclidean space with the standard flat metric. This is a homogeneous space and $\iota_{\mathbb{M},P}=\infty$
for any $P$. We have
$$\Theta_{\mathbb{M},P}=r^{m-1},\quad\phi_1=r^{1-m},\quad
\phi_0=\left\{\begin{array}{ll}\log(r)&\text{if }m=2\\ (2-m)^{-1}r^{2-m}&\text{if }m>2\end{array}\right\}\,.$$

More generally, if $\mathbb{M}$ is a complete harmonic space with $\tau=0$,
then the universal cover of $\mathbb{M}$ is $\mathbb{R}^m$. Let $\Gamma:=\pi_1(\mathbb{M})$
and let $\pi:\mathbb{R}^m\rightarrow\mathbb{R}^m/\Gamma=\mathbb{M}$
be the universal cover map. If $\mathbb{M}$ is orientable,
then $\mathbb{M}=\mathbb{R}^k\times\{\mathbb{R}^{m-k}/\Gamma\}$ where
$\Gamma$ is a co-compact lattice in $\mathbb{R}^{m-k}$ that defines a torus of dimension $m-k$.
More generally, $\mathbb{M}$ is a generalized Klein bottle.
We illustrate the results of Theorem~\ref{T1.2} in Section~\ref{S3.1} by examining the square torus,
and in Section~\ref{S3.2}, by studying the open Mobius strip.
\subsection{The square torus}\label{S3.1}
Let $\mathbb{Z}^2$ be the integer lattice in $\mathbb{R}^2$ and let
$\mathbb{M}:=\mathbb{R}^2/\mathbb{Z}^2$; this is a homogeneous space
so without loss of generality we may assume $\tilde P=0$ and $P=0$. We then
have $\iota_{\mathbb{M},0}=\frac12$ and $\tilde{\mathcal{O}}_{\tilde0}=(-\frac12,\frac12)\times(-\frac12,\frac12)$ which we identify with $\mathcal{O}_0$ in $\mathbb{M}$.
The cut-locus is the image of the boundary of the closed
square $[-\frac12,\frac12]\times[-\frac12,\frac12]$ and is topologically a figure 8.
We picture the
situation below.
\smallbreak\vbox{\centerline{Picture 3.1}\smallbreak\centerline
{\includegraphics[height=5cm,keepaspectratio=true]{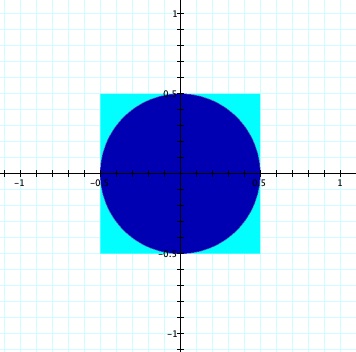}}}
\begin{remark}\label{R3.1}\rm
If $\mathcal{O}(\delta)=(-\delta,1-\delta)\times(-\delta,1-\delta)$ for $0<\delta<1$, then we have that
$\phi_0^\delta:=\log(x^2+y^2)$ is a harmonic real analytic function on $\mathcal{O}(\delta)$
which is radial near $0$ and which agrees with $\phi_0$ near the origin. It is radial on all of $\mathcal{O}(\delta)$ if and only if $\delta=\frac12$.
\end{remark}

\subsection{The open Klein bottle}\label{S3.2}
There is a unique real line bundle $\mathbb{M}$ over the circle which is not orientable. We can realize this
as $[-\frac12,\frac12]\times\mathbb{R}/\sim$ where we identify
the point $(-\frac12,y)$ with the point $(\frac12,-y)$. Alternatively, let
$T(x,y)=(x+1,-y)$ define an action of $\mathbb{Z}$ on $\mathbb{R}^2$.
Then we may identify $\mathbb{M}$ with
$\mathbb{R}^2/\mathbb{Z}$. Let $P=(x,y)\in\mathbb{R}^2$. The translations $(x,y)\rightarrow(x+a,y)$ and the reflection
$(x,y)\rightarrow(x,-y)$ commute with the action
of $\mathbb{Z}$ so $\mathbb{M}$ is homogeneous in the $x$ direction and also the reflection
$(x,y)\rightarrow(x,-y)$. Thus we may assume that $P=(0,a)$ for $a\ge0$. By Equation~(\ref{E1.c}),
\begin{equation}\label{E3.a}\begin{array}{l}
\iota_{\mathbb{M},(0,a)}=\frac12\min_{n\ne0}\|(0,a)-T^n(0,a)\|\\[0.05in]
\quad=\frac12\min\left\{\vphantom{\vrule height 10pt}\min_{k\ne0}\{\|(0,a)-(2k,a)\|\},\min_k\{\|(0,a)-(2k+1,-a)\|{{\}}}\right\}\\[0.05in]
\quad=\frac12\min\{2,\sqrt{1+4a^2}\}\,.
\end{array}\end{equation}
In particular, $\mathbb{M}$ is not homogeneous since $\iota_{\mathbb{M},P}$ is not constant.

We now determine the cut locus.
Suppose first $P=(0,0)$. Let $\mathcal{O}:=(-.5,.5)\times\mathbb{R}$. Then
$\pi$ is a diffeomorphism from $\mathcal{O}$ to an open neighborhood of the origin
in $\mathbb{M}$ and the argument given above for the torus shows
$r_{\mathbb{M},P}(x,y)=(x^2+y^2)^{1/2}$ on $\mathcal{O}$. The cut locus is
$\pi\{\pm\frac12\times\mathbb{R}\}$ and $r$ is not smooth there; $\mathcal{O}$ is
a maximal domain for the non-constant radial harmonic function $\phi_1$.
We refer to Picture 3.2 below.

On the other hand, if $a\ne0$, the situation is a bit different. We consider the
set of points $(x,y)$ so that $\|(x,y)-(0,a)\|<\|T^n(x,y)-(0,a)\|$ for all $n\ne0$,
i.e. $(x,y)$ is the closest point to $(0,a)$ among all possible pre-images of $(x,y)$. We compute:
\begin{eqnarray*}
&&\|T^n(x,y)-(0,a)\|^2-\|(x,y)-(0,a)\|^2\\
&=&\left\{\begin{array}{ll}n^2+2nx&\text{if }n\text{ is even}\\
n^2+2nx+4ay&\text{if }n\text{ is odd}\end{array}\right\}\,.
\end{eqnarray*}
If $n=\pm2$, then we obtain $4\pm4x>0$ and hence $|x|<1$. We impose this condition
by restricting to $\mathcal{O}=(-1,1)\times\mathbb{R}$. We then have $n^2+2nx>0$ if $n\ne0$.
And, furthermore, if $n$ is odd, then $n^2+2nx+4ay$ attains its minimum when $n=\pm1$. Thus
we must impose the conditions:
$$
1+2x+4ay>0\text{ and }1-2x+4ay>0\,.
$$
Thus the fundamental region on which $r$ is smooth is the region
$$
-1<x<1,\quad 1+2x+4ay>0,\quad 1-2x+4ay>0\,.
$$

Suppose $a>0$. To create the final region, the line from $(-1,\frac14a)$ to $(-1,\infty)$ is glued to the line from $(1,\frac1a)$ to $(1,\infty)$ using $T^2$,
and the directed line segment from $(-1,\frac14a)$ to $(0,-\frac14a)$ is glued
to the line segment $(0,-\frac14a)$ to $(1,\frac14a)$ using $T$. The pattern is similar if $a<0$.
As $a\rightarrow0$, the two lines $1\pm2x+4ay=0$ converge to the lines $x=\pm\frac12$.
The closest points to the center on the 4 line segments are at $(a,\pm1)$ and $(0,\pm\frac12)$. Thus,
as noted in Equation~(\ref{E3.a}), the injectivity radius
is $\iota_{\mathbb{M},P}=\min(1,\sqrt{a^2+\frac14})$; $(0,0)$ has the smallest injectivity radius of $\frac12$ and the points $(0,a)$ for $a\ge\frac{\sqrt 3}2$
have the largest injectivity radius of 1.

\smallbreak\vbox{\centerline{Picture 3.2}\smallbreak\centerline{\begin{tabular}{cccc}
\includegraphics[height=3.5cm,keepaspectratio=true]{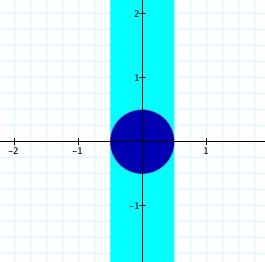}&
\includegraphics[height=3.5cm,keepaspectratio=true]{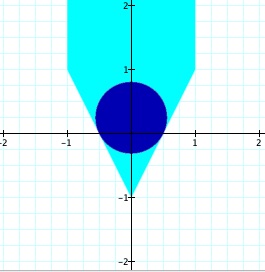}&
\includegraphics[height=3.5cm,keepaspectratio=true]{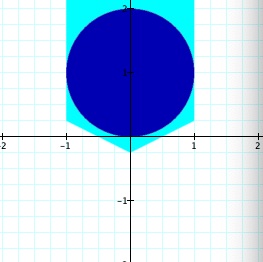}\\
$\iota_{\mathbb{M},(0,0)}=\frac12$&$\iota_{\mathbb{M},(0,\frac14)}\approx.56$&$\iota_{\mathbb{M},(0,1)}=1$
\end{tabular}}}

It is worth noting that there are other maximal domains for $\phi_1$ other than those provided
by removing the cut locus. Let $P=(0,0)$ and let $\mathcal{O}_2:=(-.25,.75)\times\mathbb{R}$.
Then $\Phi_1=\log(x^2+y^2)$ is harmonic on $\mathcal{O}_2$. More generally, if
$\mathcal{U}$ is any connected open subset of $\mathbb{R}^2$ containing $(0,a)$ on which $\pi$ is 1-1,
then $\pi(\mathcal{U})$ is a domain on which $\Phi_1:=\log(x^2+(y-a)^2)$ is well defined and
harmonic but it will not be radial on this domain if the domain contains any cut points.

\section{Negative curvature}\label{S4}
\subsection{Simply connected examples} The solutions $\phi_0$ and $\phi_1$ of
Table~4.1 must be adjusted to replace the corresponding trignometric functions by their
hyperbolic analogues with appropriate sign changes to obtain the corresponding solutions for the negatively curved
rank 1 symmetric spaces; they are defined on all of $\mathbb{M}-\{P\}$. We obtain:
\goodbreak\smallbreak\vbox{\centerline{Table 4.1}
\smallbreak\centerline{$\begin{array}{| l | l | l |}\noalign{\hrule}\mathbb{M}&\phi_1&\phi_0\\\noalign{\hrule}
\vphantom{\vrule height 12pt}\widetilde{\mathbb{S}}^2&\frac1{\sinh(r)}&\log \left(\tanh \left(\frac{r}{2}\right)\right)\\\noalign{\hrule}
\vphantom{\vrule height 12pt}\widetilde{\mathbb{S}}^3&\frac1{\sinh^2(r)}&-\coth (r)\\\noalign{\hrule}
\vphantom{\vrule height 12pt}\widetilde{\mathbb{S}}^4&\frac1{\sinh^3(r)}&-\frac{1}{8} \text{csch}^2\left(\frac{r}{2}\right)-\frac{1}{8} \text{sech}^2\left(\frac{r}{2}\right)-\frac{1}{2} \log \left(\tanh \left(\frac{r}{2}\right)\right) \\ \noalign{\hrule}
\vphantom{\vrule height 12pt}\widetilde{\mathbb{S}}^5&\frac1{\sinh^4(r)}&\frac23 \coth (r)-\frac{1}{3} \coth (r) \text{csch}^2(r) \\
\noalign{\hrule}
\widetilde{\mathbb{CP}}^2&\frac1{\sinh(r)^3\cosh(r)}&
-\frac{1}{2} \text{csch}^2(r)-\log (\tanh (r))\\\noalign{\hrule}
\widetilde{\mathbb{CP}}^3&\frac1{\sinh(r)^5\cosh(r)}&
-\frac{1}{4} \text{csch}^4(r)+\frac12\text{csch}^2(r)+\log (\tanh (r))\\ \noalign{\hrule}
\widetilde{\mathbb{CP}}^4&\frac1{\sinh(r)^7\cosh(r)}&
-\frac{1}{6} \text{csch}^6(r)+\frac14\text{csch}^4(r)-\frac12\text{csch}^2(r)-\log (\tanh (r))\\\noalign{\hrule}
\widetilde{\mathbb{HP}}^2&\frac1{\sinh(r)^7\cosh(r)^3}&
\frac{1}{2} \left(-\frac{1}{3} \text{csch}^6(r)+\text{csch}^4(r)-3 \text{csch}^2(r)-\text{sech}^2(r)\right.\\
&&\left.\quad-8 \log (\tanh (r))\right)
\\ \noalign{\hrule}
\widetilde{\mathbb{HP}}^3&\frac1{\sinh(r)^{11}\cosh(r)^3}&
-\frac{1}{10} \text{csch}^{10}(r)+\frac14\text{csch}^8(r)-\frac12\text{csch}^6(r)+\text{csch}^4(r)\\
&&-\frac52 \text{csch}^2(r)
-\frac12\text{sech}^2(r)
-6 \log (\tanh (r))
\\ \noalign{\hrule}
\widetilde{\mathbb{HP}}^4&\frac1{\sinh(r)^{15}\cosh(r)^3}&
\frac{1}{2} \left(-\frac{1}{7} \text{csch}^{14}(r)+\frac13\text{csch}^{12}(r)-\frac35 \text{csch}^{10}(r)
+\text{csch}^8(r)\right.\\
&&\left.\quad-\frac53 \text{csch}^6(r)+3 \text{csch}^4(r)
-7 \text{csch}^2(r)-\text{sech}^2(r)\right.\\
&&\left.\quad-16 \log (\tanh (r))\right)
\\\noalign{\hrule}\noalign{\goodbreak\hrule}
\widetilde{\mathbb{OP}}^2&\frac1{\sinh(r)^{15}\cosh(r)^7}&
-\frac{1}{14} \text{csch}^{14}(r)+\frac13\text{csch}^{12}(r)-\text{csch}^{10}(r)+\frac52 \text{csch}^8(r)\\
&&-\frac{35}6 \text{csch}^6(r)
+14 \text{csch}^4(r)-42 \text{csch}^2(r)-\frac16\text{sech}^6(r)\\
&&-2 \text{sech}^4(r)-18 \text{sech}^2(r)-120 \log (\tanh (r))
\\ \noalign{\hrule}\end{array}$}}

\subsection{The proof of Theorem~\ref{T1.4}}\label{S4.2}
Let $\mathbb{M}$ be a compact Riemannian manifold of dimension $m=4\ell$
which is modeled on a rank 1 symmetric space $\mathbb{M}_-$ of negative curvature. Let $\mathbb{M}_+$
be the dual rank 1 symmetric space of positive curvature.
The argument of Section~\ref{S2} using the Chern-Gauss-Bonnet Formula to obtain information on
$|\Gamma|$ in the positive curvature setting can be turned around to obtain
information on $\operatorname{Vol}(\mathbb{M})$.
Let $\operatorname{Pf}_\ell$ be the Pfaffian; this is a polynomial of degree $\ell$ in the curvature tensor
so $\operatorname{Pf}_\ell(\mathbb{M})=\operatorname{Pf}_\ell(\mathbb{M}_-)=(-1)^\ell\operatorname{Pf}_\ell(\mathbb{M}_+)$;
these are constant since the spaces in question are locally homogeneous. The Chern-Gauss-Bonnet Formula
then shows
\begin{eqnarray*}
\chi(M_+)&=&\operatorname{Pf}_\ell(\mathbb{M}_+)\cdot\operatorname{vol}(\mathbb{M}_+),\\
\chi(M)&=&\operatorname{Pf}_\ell(\mathbb{M})\cdot\operatorname{vol}(\mathbb{M})
=(-1)^\ell\operatorname{Pf}_\ell(\mathbb{M}_+)\cdot\operatorname{vol}(\mathbb{M}_+)\cdot
\frac{\operatorname{vol}(\mathbb{M})}{\operatorname{vol}(\mathbb{M}_+)}\\
&=&(-1)^\ell\chi(M_+)\cdot
\frac{\operatorname{vol}(\mathbb{M})}{\operatorname{vol}(\mathbb{M}_+)}\,.
\end{eqnarray*}
Since $\chi(M_+)>0$, this implies $\chi(M)\ne0$. Consequently
$$
1\le|\chi(M)|=\chi(M_+)\frac{\operatorname{vol}(\mathbb{M})}{\operatorname{vol}(\mathbb{M}_+)}
\quad\text{so}\quad\operatorname{vol}(\mathbb{M})
\ge\frac{\operatorname{vol}(\mathbb{M}_+)}{\chi(M_+)}\,.
$$
Theorem~\ref{T1.4} now follows since, by Lemma~\ref{L2.1},
\medbreak\hfill$\chi({S}^m)=2$, $\chi({CP}^k)=k+1$,
$\chi({HP}^k)=k+1$, and $\chi({OP}^2)=3$.\hfill\vphantom{.}
\qed
\subsection{The proof of Theorem~\ref{T1.5}}\label{S4.3} Let $\mathbb{M}$ and $\mathbb{M}_{\pm}$
be as in Section~\ref{S4.2}. Suppose that $m=4j$ is divisible by 4.
The estimates of Theorem~\ref{T1.4} arise from the Chern-Gauss-Bonnet Formula and are not
optimal in certain settings. By using the Hirzebruch Signature Formula, we can obtain better
estimates. Let
$$
\mathbb{M}_-\in\{\widetilde{\mathbb{CP}}^{2k},\widetilde{\mathbb{HP}}^{4k},\widetilde{\mathbb{OP}}^2\}
\quad\text{so}\quad
\mathbb{M}_+\in\{{\mathbb{CP}}^{2k},{\mathbb{HP}}^{4k},{\mathbb{OP}}^2\}\,.
$$

Suppose first that $\mathbb{M}$ is orientable.
The Hirzebruch polynomial $L_j$ is a constant
 multiple of the oriented volume form. Since $L_j$ is quadratic in the curvature tensor, we have that
 $L_j(\mathbb{M})=L_j(\mathbb{M}_-)=L_j(\mathbb{M}_+)$
 where we adust the orientations to ensure this equality. By Lemma~\ref{L2.1},
 $\operatorname{sign}(M_+)=1$,
 Therefore,
 \begin{eqnarray*}
\operatorname{sign}(M_+)&=&L_j(\mathbb{M})\cdot\operatorname{vol}(\mathbb{M}),\\
\operatorname{sign}(M)&=&L_j(\mathbb{M})\cdot\operatorname{vol}(\mathbb{M})
=L_j(\mathbb{M})\cdot\operatorname{vol}(\mathbb{M}_+)\cdot
\frac{\operatorname{vol}(\mathbb{M})}{\operatorname{vol}(\mathbb{M}_+)}\\
&=&\operatorname{sign}(M_+)\cdot\frac{\operatorname{vol}(\mathbb{M})}{\operatorname{vol}(\mathbb{M}_+)}
=\frac{\operatorname{vol}(\mathbb{M})}{\operatorname{vol}(\mathbb{M}_+)}\,.
\end{eqnarray*}
This shows $\operatorname{sign}(M)$ is positive and hence
$\operatorname{sign}(M)\ge1$
so $\operatorname{vol}(\mathbb{M})\ge\operatorname{vol}(\mathbb{M}_+)$.
If $\mathbb{M}$ is not orientable, we let $\mathbb{M}_0$
be the orientable double cover and estimate
$\frac12\operatorname{vol}(\mathbb{M})=
\operatorname{vol}(\mathbb{M}_0)\ge\operatorname{vol}(\mathbb{M}_+)$.\qed

\section*{Dedication}
On 11 March 2004, 10 bombs exploded on 4 trains near the Atocha Station
in Madrid killing 191 and injuring more than 1800; 18 Islamic fundamentalists and
3 Spanish accomplices were convicted of the bombings which was one of Europe's deadliest
terrorist attacks in the years since World War II. Subsequently, one of us (Gilkey) and his coauthors
dedicated a paper \cite{DFGG04} writing ``En memoria de todas las
v\'\i ctimas inocentes. Todos \'\i bamos en ese tre{n}.
(In memory of all these innocent victims. We were all on that train)''.
This paper is being written during one of the worst outbreaks of war in Europe since World War II.
We dedicate this paper, writing in
a similar vein to show our solidarity with the innocent victims in Ukraine, that:
  Mи всі в Україні (``we are all in Ukraine'').

\section*{Research support}
The research of P. Gilkey was partially supported by grant PID2020-114474GB-I0 (Spain).
The research of J. H. Park was partially supported by the National Research Foundation of Korea (NRF) grant
funded by the Korea government (MSIT) (NRF-2019R1A2C1083957).
Helpful suggestions and comments were provided by our colleague and friend
E. Garc\'{i}a-R\'{i}o.

\end{document}